\theoremstyle{plain}
\theoremstyle{definition}\newtheorem{theorem}{Theorem}[section]
\theoremstyle{plain}\newtheorem{lemma}[theorem]{Lemma}
\theoremstyle{plain}
\theoremstyle{plain}\newtheorem{proposition}[theorem]{Proposition}
\theoremstyle{remark}\newtheorem{remark}{Remark}[section]
\newcommand{\norm}[1]{\left\|#1\right\|}
\begin{document}
\title{Global well-posedness for a modified critical dissipative quasi-geostrophic equation}
\author{Changxing Miao\footnote{Institute of Applied Physics and Computational Mathematics, P.O. Box 8009,
            Beijing 100088, P.R. China. Email: miao\_{}changxing@iapcm.ac.cn.}\;  and  Liutang Xue\footnote{
            The Graduate School of China Academy of Engineering Physics, P.O. Box 2101, Beijing 100088, P.R. China. Email: xue\_{}lt@163.com.}}
\date{}
\maketitle
\begin{abstract}
In this paper we consider the following modified quasi-geostrophic equation
\begin{equation*}
 \partial_{t}\theta+u\cdot\nabla\theta+\nu |D|^{\alpha}\theta=0,
 \quad
 u=|D|^{\alpha-1}\mathcal{R}^{\bot}\theta,\quad x\in\mathbb{R}^2
\end{equation*}
with $\nu>0$ and $\alpha\in ]0,1[\,\cup \,]1,2[$. When $\alpha\in]0,1[$, the equation was firstly introduced
by Constantin, Iyer and Wu in \cite{ref ConstanIW}.
Here, by using the modulus of continuity method, we prove the global well-posedness of the system with the smooth initial data.
As a byproduct, we also show that for every $\alpha\in ]0,2[$, the Lipschitz norm of the solution has a uniform exponential bound.
\end{abstract}

\noindent {\bf MSC(2000):}\quad 76U05, 76B03, 35Q35 \\
\noindent {\bf Keywords:}\quad   Modified quasi-geostrophic equation, Modulus of continuity,
 Blow-up criterion, Global well-posedness.

\section{Introduction}
\setcounter{section}{1}\setcounter{equation}{0}

 In this paper we
focus on the following modified 2D dissipative
quasi-geostrophic equation
\begin{equation}\label{MQG}
   \begin{cases}
   \partial_{t}\theta+u\cdot\nabla\theta+\nu |D|^{\alpha}\theta=0 \\
   u=|D|^{\alpha-1}\mathcal{R}^{\bot}\theta, \qquad  \theta|_{t=0}=\theta_{0}(x)
   \end{cases}
\end{equation}
with $\nu>0$, $\alpha\in ]0,1[\,\cup \,]1,2[$,
$|D|^{\beta}=(-\Delta)^{\frac{\beta}{2}}$ is defined via the
Fourier transform
$$\widehat{|D|^{\beta}f}(\zeta)=|\zeta|^{\beta}\hat{f}(\zeta)$$
and
$$\mathcal{R}^{\bot}\theta=(-\mathcal{R}_{2}\theta,\mathcal{R}_{1}\theta)=|D|^{-1}( \partial_{2}\theta,-\partial_{1}\theta)$$
where $\mathcal{R}_{i}$($i=1,2$) are the usual Riesz transforms (cf. \cite{ref Duo}).

When $\alpha=0$, this model describes the evolution of the vorticity
of a two dimensional damped inviscid incompressible fluid. The case
of $\alpha=1$ just is the critical dissipative quasi-geostrophic
equation which arises in the geostrophic study of rotating fluids (cf. \cite{Con1}).
Although when $\alpha=2$ the flow term in \eqref{MQG} vanishes, we can still view the model introduced in \cite{ref FRIV}
as a meaningful generalization of this endpoint case, where the model is derived from the study of
the full magnetohydrodynamic equations and the divergence-free three-dimensional velocity $u$ satisfies $u= M [\theta]$
with $ M $ a nonlocal differential operator of order 1.

For convenience, we here recall the well-known 2D quasi-geostrophic
equation
\begin{equation*}
(QG)_{\alpha}\quad\begin{cases}
   \partial_{t}\theta+u\cdot\nabla\theta+\nu |D|^{\alpha}\theta=0 \\
   u=\mathcal{R}^{\bot}\theta, \qquad  \theta(0,x)=\theta_{0}(x)
   \end{cases}
\end{equation*}
where $\nu\geq 0$ and $0\leq\alpha\leq2$. When $\nu>0$, $\alpha\in ]0,1[\,\cup \, ]1,2[$, we
observe that the system \eqref{MQG} is almost the same with the quasi-geostrophic equation, and its only difference
lies on introducing an extra $|D|^{\alpha-1}$ in the definition of
$u$. When $\alpha\in ]0,1[$, $|D|^{\alpha-1}$ is a negative derivative operator and always plays a good role; while when $\alpha\in]1,2[$,
$|D|^{\alpha-1}$ is a positive derivative operator and always takes a bad part. Moreover, corresponding to the dissipation
operator $|D|^\alpha$, this additional operator makes the equation $(QG)_\alpha$ be a new balanced state: the flow term
$u\cdot\nabla\theta$ scale the same way as the dissipative term
$|D|^{\alpha}\theta$, i.e., the equation \eqref{MQG} is scaling
invariant under the transformation
$$\theta(t,x)\rightarrow \theta_{\lambda}(t,x):=\theta(\lambda^{\alpha}t,\lambda x),\quad \mathrm{with}\quad \lambda>0.$$
We note that in the sense of scaling invariance, the
system \eqref{MQG} is similar to the critical dissipative
quasi-geostrophic equation.

Recently, when $\alpha\in ]0,1[$, Constantin, Iyer and Wu in \cite{ref ConstanIW} introduced
this modified quasi-geostrophic equation and proved the global
regularity of Leray-Hopf weak solutions to the system with $L^{2}$ initial
data. Basically, they use the method from Caffarelli-Vasseur
\cite{Caffarelli} which deals with the same issue of 2D critical
dissipative quasi-geostrophic equation $(QG)_{1}$. We also remark
that partially because of its simple form and its internal analogy with the 3D
Euler/Navier-Stokes equations, the quasi-geostrophic equation $(QG)_\alpha$, especially the critical one $(QG)_{1}$, has been
extensively considered (see e.g. \cite{ref AbidiH,Caffarelli, ref ChenMZ, Con1, Con2, Con3, ref AC-DC, ref Dong, KisNV,Wu1} and references therein).
While global regularity of Navier-Stokes
equations remains an outstanding challenge in mathematical physics,
the global issue of the 2D critical dissipative
quasi-geostrophic equation has been in a satisfactory state.
In \cite{Con3} Constantin, Cordoba and Wu showed the global well-posedness
of the classical solution under the condition that the zero-dimensional $L^\infty$ norm of the data is small.
This smallness assumption was firstly removed by Kiselev, Nazarov and Volberg in \cite{KisNV}, where they obtained the global
well-posedness for the arbitrary periodic smooth initial data by using a modulus of continuity method.
Almost at the same time, Caffarelli and Vasseur in \cite{Caffarelli} resolved the
problem to establish the global regularity of weak solutions
associated with $L^{2}$ initial data by exploiting the De Giorgi method. We also cite the work of
Abidi-Hmidi \cite{ref AbidiH} and Dong-Du \cite{ref Dong}, as
extended work of \cite{KisNV}, in which the authors proved the
global well-posedness with the initial data belonging to the
(critical) space $\dot{B}^{0}_{\infty,1}$ and $H^{1}$ respectively
without the additional periodic assumption.

The main goal in this paper is to prove the global well-posedness of
the smooth solutions for the system \eqref{MQG} with $\alpha\in ]0,1[\,\cup\,]1,2[$. In contrast with
the work of \cite{ref ConstanIW}, we here basically follow the
pathway of \cite{KisNV} to obtain the global results by
constructing suitable moduli of continuity. Precisely, we have

\begin{theorem}\label{main thm1}
Let $\nu>0$, $\alpha\in]0,2[$ and $\theta_{0}\in H^{m}$, $m>2$, then
there exists a unique global solution
\begin{equation*}
 \theta\in\mathcal{C}([0,\infty[;H^{m}) \cap L_{\mathrm{loc}}^2([0,\infty[; H^{m+\frac{\alpha}{2}})\cap \mathcal{C}^{\infty}(]0,\infty[\times \mathbb{R}^{2})
\end{equation*}
to the modified
quasi-geostrophic equation \eqref{MQG}. Moreover, we get the uniform bound of the
Lipschitz norm
\begin{equation}\label{eq ExpBdd}
 \sup_{t\geq 0}\norm{\nabla\theta(t)}_{L^\infty}\leq C\norm{\nabla\theta_0}_{L^\infty}\exp\{C\norm{\theta_0}_{L^\infty}\},
\end{equation}
where $C$ is an absolute constant depending only on $\alpha,\nu$.
\end{theorem}

The proof is divided into two parts. First through applying the
classical method, we obtain the local existence results (Proposition \ref{prop local}) and
further build the blowup criterion (Proposition \ref{prop break}). Then we adopt the nonlocal
maximum principle method of Kiselev-Nazarov-Volberg and finally
manage to remove all the possible breakdown scenarios by
constructing suitable moduli of continuity.

\begin{remark}\label{rem main1}
 The main new ingredient in the global existence consists of
 modulus of continuity with the explicit formula \eqref{eq modulus0}
 which is suitable for  every $\alpha\in]0,2[$.  This MOC
 has a logarithmic growth near infinity, and further yields the uniform exponential bound of the Lipschitz norm of the solution.
 In particular, when $\alpha=1$, \eqref{eq ExpBdd} is a slight improvement of the corresponding bound in \cite{Kis}, where it is a double exponential type.
\end{remark}

\begin{remark}\label{rem main2}
 When $\alpha\in ]1,2[$, from the viewpoint of weak solutions, the authors in \cite{MiaoXue2} find that the regularity criterion of \eqref{MQG}
 in terms of H\"older continuous solutions is worse than
 that of $(QG)_1$, that is, we a priori need $\theta\in L^{\infty}([t_0,t_1]; C^\sigma(\mathbb{R}^2))$ with $\sigma >\frac{\alpha-1}{2}$
 to ensure that $\theta$ is a smooth solution in $]t_0,t_1]$ (in contrast with $\sigma>0$ when $\alpha=1$).
 Thus if we rely on this criterion, it is not sufficient to obtain the global regularity of
 \eqref{MQG} with $\alpha\in ]1,2[$ by merely applying the method of \cite{Caffarelli}.
\end{remark}

The paper is organized as follows. In Section 2, we present some
preparatory results. In Section 3, some facts about modulus of
continuity are discussed. In Section 4, we obtain the local results
and establish blowup criterion. Finally, we prove the global
existence in Section 5.

\section{Preliminaries}
\setcounter{section}{2}\setcounter{equation}{0}

In this preparatory section, we present the definitions and some
related results of the Sobolev spaces and the Besov spaces, also we
provide some important estimates which will be used later.

We begin by introducing some notations.
\\
$\diamond$ Throughout this paper $C$ stands for a constant which may be different from line to line. We
sometimes use $A\lesssim B$ instead of $A\leq C B$, and use $A\lesssim_{\beta,\gamma\cdots}B$ instead of $A\leq C(\beta,\gamma,\cdots)B$
with $C(\beta,\gamma,\cdots)$ a constant depending on $\beta,\gamma,\cdots$. For $A\thickapprox B$ we mean $A\lesssim B\lesssim A$.
\\
$\diamond$ Denote by
$\mathcal{S}(\mathbb{R}^{n})$ the Schwartz space of rapidly
decreasing smooth functions, $\mathcal{S}'(\mathbb{R}^{n})$ the
space of tempered distributions,
$\mathcal{S}'(\mathbb{R}^{n})/\mathcal{P}(\mathbb{R}^{n})$ the
quotient space of tempered distributions which modulo polynomials.
\\
$\diamond$
$\mathcal{F}f$ or $\hat{f}$ denotes the Fourier transform, that is
$\mathcal{F}f(\zeta)=\hat{f}(\zeta)=\int_{\mathbb{R}^{n}}e^{-ix\cdot\zeta}f(x)\textrm{d} x,$
while $\mathcal{F}^{-1}f$ the inverse Fourier transform, namely,
$\mathcal{F}^{-1}f(x)=(2\pi)^{-n}\int_{\mathbb{R}^{n}}e^{ix\cdot\zeta}f(\zeta)\textrm{d}
\zeta$.

Now we give the definition of $L^{2}$ based Sobolev space. For
$s\in\mathbb{R}$, the inhomogeneous Sobolev space
\begin{equation*}
H^{s}:=\Big\{f\in \mathcal{S}'(\mathbb{R}^{n});
\norm{f}^{2}_{H^{s}}:=\int_{\mathbb{R}^{n}}(1+|\zeta|^{2})^{s}|\hat{f}(\zeta)|^{2}\textrm{d}
\zeta<\infty\Big\}
\end{equation*}
Also one can define the corresponding homogeneous space:
\begin{equation*}
\dot{H}^{s}:=\Big\{f\in
\mathcal{S}'(\mathbb{R}^{n})/\mathcal{P}(\mathbb{R}^{n});
\norm{f}^{2}_{\dot{H}^{s}}:=\int_{\mathbb{R}^{n}}|\zeta|^{2s}|\hat{f}(\zeta)|^{2}\textrm{d}
\zeta<\infty\Big\}
\end{equation*}

The following calculus inequality is well-known(see \cite{ref
Bertozzi-Majda})
\begin{lemma}\label{lem SobCalIneq}
$\forall m\in\mathbb{R}^{+}$, there exists a constant $c_{m}>0$ such  that
   \begin{equation}\label{CalInqSob1}
    \norm{fg}_{H^{m}}\leq c_{m}
    \big(\norm{f}_{L^{\infty}}\norm{g}_{H^{m}}+\norm{f}_{H^{m}}\norm{g}_{L^{\infty}}\big).
   \end{equation}
\end{lemma}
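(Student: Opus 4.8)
The inequality \eqref{CalInqSob1} is the classical Moser (tame product) estimate, and I would establish it by a Littlewood--Paley decomposition, since this handles an arbitrary real exponent $m>0$ uniformly and avoids the exact Leibniz rule, which is unavailable for fractional $m$. Writing $\Delta_j$ and $S_j$ for the usual dyadic blocks and recalling the equivalence $\norm{h}_{H^m}^2\thickapprox\sum_j 2^{2jm}\norm{\Delta_j h}_{L^2}^2$, I would split the product by Bony's paraproduct formula $fg=T_fg+T_gf+R(f,g)$, where $T_fg=\sum_j S_{j-1}f\,\Delta_j g$ and $R(f,g)=\sum_{|j-k|\le1}\Delta_j f\,\Delta_k g$, and estimate the three pieces separately.

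For the paraproduct $T_fg$, the spectral support of $S_{j-1}f\,\Delta_j g$ lies in an annulus $|\zeta|\sim 2^j$, so only finitely many $j$ contribute to each output block $\Delta_k(T_fg)$; using $\norm{S_{j-1}f}_{L^\infty}\lesssim\norm{f}_{L^\infty}$ and summing $2^{2km}\norm{\Delta_k T_fg}_{L^2}^2$ as a finite-band convolution over dyadic scales, Young's inequality gives $\norm{T_fg}_{H^m}\lesssim\norm{f}_{L^\infty}\norm{g}_{H^m}$. The same computation with the roles reversed yields $\norm{T_gf}_{H^m}\lesssim\norm{g}_{L^\infty}\norm{f}_{H^m}$. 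Both bounds are valid for every $m\in\mathbb{R}$, so they already produce the two terms on the right-hand side of \eqref{CalInqSob1}.

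The remainder is where the hypothesis $m>0$ becomes essential, and I expect it to be the only delicate point. Each term $\Delta_j f\,\Delta_k g$ with $|j-k|\le1$ has frequencies $\lesssim 2^j$, so a nonzero contribution to $\Delta_\ell R(f,g)$ forces $\ell\lesssim j$; bounding $\norm{\Delta_j f}_{L^\infty}\lesssim\norm{f}_{L^\infty}$ and carrying the $H^m$ weight on $\Delta_k g$, the output is controlled by $\sum_\ell 2^{2\ell m}\big(\sum_{j\gtrsim \ell}\norm{f}_{L^\infty}\norm{\Delta_j g}_{L^2}\big)^2$, and the geometric sum over the low output frequencies $\ell\lesssim j$ converges precisely because $m>0$. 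This gives $\norm{R(f,g)}_{H^m}\lesssim\norm{f}_{L^\infty}\norm{g}_{H^m}$; if $m\le0$ the step fails without extra structure, consistent with the hypothesis $m\in\mathbb{R}^{+}$. Collecting the three bounds yields \eqref{CalInqSob1} with a constant $c_m$ depending on $m$ only through the Littlewood--Paley constants and the rate of the geometric series.

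Alternatively, for integer $m$ one can avoid Littlewood--Paley, which is essentially the route in \cite{ref Bertozzi-Majda}: expand $\partial^\beta(fg)$ by the Leibniz rule for $|\beta|\le m$, and for the top-order terms $|\beta|=m$ apply H\"older with the conjugate exponents $\tfrac{2m}{|\gamma|}$ and $\tfrac{2m}{m-|\gamma|}$ to each summand $\partial^\gamma f\,\partial^{\beta-\gamma}g$, interpolating each factor by the Gagliardo--Nirenberg inequality $\norm{\partial^\gamma h}_{L^{2m/|\gamma|}}\lesssim\norm{h}_{L^\infty}^{1-|\gamma|/m}\norm{h}_{\dot H^m}^{|\gamma|/m}$, the lower-order terms being easier. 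Writing $\theta=|\gamma|/m$, $a=\norm{f}_{L^\infty}\norm{g}_{\dot H^m}$ and $b=\norm{f}_{\dot H^m}\norm{g}_{L^\infty}$, each product is of the form $a^{1-\theta}b^{\theta}$, which the elementary convexity bound $a^{1-\theta}b^\theta\le a+b$ absorbs into the right-hand side. This is shorter but genuinely uses $m\in\mathbb{N}$, so I would favour the paraproduct argument for the fractional exponents needed in the sequel.
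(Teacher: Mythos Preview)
Your argument is correct, but there is nothing to compare against: the paper does not prove this lemma at all. It states the inequality as ``well-known'' and refers to \cite{ref Bertozzi-Majda}, with no proof sketch given. The reference (Majda--Bertozzi, \emph{Vorticity and Incompressible Flow}) in fact contains precisely your alternative integer-$m$ argument: Leibniz expansion of $\partial^\beta(fg)$, H\"older with exponents $2m/|\gamma|$ and $2m/(m-|\gamma|)$, and Gagliardo--Nirenberg interpolation of the factors. So your ``alternative'' is the cited proof, and your primary paraproduct argument is a genuine improvement over what the paper invokes, since it covers the full range $m\in\mathbb{R}^+$ as stated in the lemma rather than only integer $m$.
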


To define Besov space we need the following dyadic partition of unity
(see e.g. \cite{ref chemin}). Choose two nonnegative radial
functions $\chi$, $\varphi\in \mathcal{D}(\mathbb{R}^{n})$ be
supported respectively in the ball $\{\zeta\in
\mathbb{R}^{n}:|\zeta|\leq \frac{4}{3} \}$ and the shell $\{\zeta\in
\mathbb{R}^{n}: \frac{3}{4}\leq |\zeta|\leq
  \frac{8}{3} \}$ such that
\begin{equation*}
 \chi(\zeta)+\sum_{j\geq 0}\varphi(2^{-j}\zeta)=1, \quad
 \forall\zeta\in\mathbb{R}^{n}; \qquad
 \sum_{j\in \mathbb{Z}}\varphi(2^{-j}\zeta)=1, \quad \forall\zeta\neq 0.
\end{equation*}
For all $f\in\mathcal{S}'(\mathbb{R}^{n})$ we define the
nonhomogeneous Littlewood-Paley operators
\begin{equation*}
 \Delta_{-1}f:= \chi(D)f; \;\;
 \Delta_{j}f:= \varphi(2^{-j}D)f,\; S_j f:=\sum_{-1\leq k\leq j-1}\Delta_k f,\quad \forall j\in\mathbb{N},
\end{equation*}
And the homogeneous Littlewood-Paley operators can be defined as follows
\begin{equation*}
  \dot{\Delta}_{j}f:= \varphi(2^{-j}D)f;\; \dot S_j f:= \sum_{k\in\mathbb{Z},k\leq j-1}\dot \Delta_k, f\quad \forall j\in\mathbb{Z}.\quad
\end{equation*}

Now we introduce the definition of Besov spaces . Let $(p,r)\in
[1,\infty]^{2}$, $s\in\mathbb{R}$, the nonhomogeneous Besov space
\begin{equation*}
  B^{s}_{p,r}:=\Big\{f\in\mathcal{S}'(\mathbb{R}^{n});\norm{f}_{B^{s}_{p,r}}:=\norm{\{2^{js}\|\Delta
  _{j}f\|_{L^{p}}\}_{j\geq -1}}_{\ell^{r}}<\infty  \Big\}
\end{equation*}
and the homogeneous space
\begin{equation*}
  \dot{B}^{s}_{p,r}:=\Big\{f\in\mathcal{S}'(\mathbb{R}^{n})/\mathcal{P}(\mathbb{R}^{n});
  \norm{f}_{\dot{B}^{s}_{p,r}}:=\|\{2^{js}\|\dot{\Delta}
  _{j}f\|_{L^{p}}\}_{j\in\mathbb{Z}}\|_{\ell^{r}(\mathbb{Z})}<\infty  \Big\}.
\end{equation*}
We point out that for all $s\in\mathbb{R}$, $B^{s}_{2,2}=H^{s}$ and
$\dot{B}^{s}_{2,2}=\dot{H}^{s}$.

The classical space-time Besov space $L^{\rho}([0,T],B^{s}_{p,r})$, abbreviated by
$L^{\rho}_{T}B^{s}_{p,r}$, is the set of tempered distribution $f$
such that
\begin{equation*}
  \norm{f}_{L^{\rho}_{T}B^{s}_{p,r}}:=\norm{\norm{\{2^{js}\norm{\Delta_{j}f}_{L^{p}}\}_{j\geq -1}}_{\ell^{r}}}_{L^{\rho}([0,T])}<\infty.
\end{equation*}
We can similarly extend to the homogeneous one $L^{\rho}_{T}\dot{B}^{s}_{p,r}$.

Bernstein's inequality is fundamental in the analysis involving
Besov spaces (see \cite{ref chemin})
\begin{lemma}
Let $f\in L^{a}$, $1\leq a\leq b\leq \infty$. Then for every $
(k,q)\in\mathbb{N}^{2}$ there exists a constant $C>0$ such that
\begin{equation*}
 \sup_{|\alpha|=k}\norm{\partial^{\alpha} S_{q}f}_{L^{b}}\leq C 2
 ^{q(k+n(\frac{1}{a}-\frac{1}{b}))}\norm{f}_{L^{a}},
\end{equation*}
\begin{equation*}
 C^{-1}2
 ^{q k}\norm{f}_{L^{a}}\leq \sup_{|\alpha|=k}\norm{\partial^{\alpha}
 \Delta_{q}f}_{L^{a}}\leq C 2^{qk}\norm{f}_{L^{a}}
 \end{equation*}
\end{lemma}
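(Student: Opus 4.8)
The plan is to derive all three estimates from a single mechanism: each Littlewood--Paley piece is frequency localized ($S_qf$ in the ball $\{|\zeta|\lesssim 2^q\}$ and $\Delta_q f$ in the annulus $\{|\zeta|\thickapprox 2^q\}$), so differentiation amounts to convolution against a rescaled Schwartz kernel, after which Young's inequality and the scaling behaviour of $L^c$-norms yield the stated powers of $2^q$.

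First I would treat $\sup_{|\alpha|=k}\norm{\partial^\alpha S_q f}_{L^b}$. Using $\widehat{S_q f}=\chi(2^{-q}\zeta)\hat f$ one has $\widehat{\partial^\alpha S_q f}(\zeta)=(i\zeta)^\alpha\chi(2^{-q}\zeta)\hat f(\zeta)$. Setting $g_\alpha(\eta):=(i\eta)^\alpha\chi(\eta)\in\mathcal{D}(\mathbb{R}^n)$ and factoring out the dilation gives $(i\zeta)^\alpha\chi(2^{-q}\zeta)=2^{q|\alpha|}g_\alpha(2^{-q}\zeta)$, hence $\partial^\alpha S_q f=2^{q|\alpha|}\big(2^{qn}h_\alpha(2^q\cdot)\big)*f$ with $h_\alpha:=\mathcal{F}^{-1}g_\alpha\in\mathcal{S}(\mathbb{R}^n)$. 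Young's inequality with $1/c:=1-(1/a-1/b)\in[0,1]$ (admissible since $a\le b$) gives $\norm{\partial^\alpha S_q f}_{L^b}\le 2^{q|\alpha|}\norm{2^{qn}h_\alpha(2^q\cdot)}_{L^c}\norm{f}_{L^a}$, and the elementary rescaling identity $\norm{2^{qn}h_\alpha(2^q\cdot)}_{L^c}=2^{qn(1-1/c)}\norm{h_\alpha}_{L^c}=2^{qn(1/a-1/b)}\norm{h_\alpha}_{L^c}$ produces the factor $2^{q(k+n(1/a-1/b))}$ after putting $k=|\alpha|$ and taking the supremum; $\norm{h_\alpha}_{L^c}$ is finite and bounded in terms of $k$ and $n$ because $h_\alpha$ is Schwartz. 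The upper bound $\sup_{|\alpha|=k}\norm{\partial^\alpha\Delta_q f}_{L^a}\le C2^{qk}\norm{f}_{L^a}$ is the special case $a=b$ (so $c=1$) of the same computation with $\chi$ replaced by $\varphi$.

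The lower bound is the only step requiring a new idea: reconstructing the localized function from its top-order derivatives. I would fix $\phi\in\mathcal{D}(\mathbb{R}^n)$ supported in an annulus away from the origin with $\phi\equiv 1$ on $\mathrm{supp}\,\varphi$, so that $\phi(2^{-q}\cdot)\equiv1$ on $\mathrm{supp}\,\widehat{\Delta_q f}$. The multinomial identity $|\zeta|^{2k}=\sum_{|\alpha|=k}\binom{k}{\alpha}\zeta^{2\alpha}$ together with $|\zeta|\thickapprox 2^q$ on the support lets me write $\widehat{\Delta_q f}(\zeta)=\sum_{|\alpha|=k}\binom{k}{\alpha}\frac{\zeta^\alpha\phi(2^{-q}\zeta)}{|\zeta|^{2k}}\,\zeta^\alpha\widehat{\Delta_q f}(\zeta)$, where each factor equals $2^{-qk}m_\alpha(2^{-q}\zeta)$ with $m_\alpha(\eta):=\eta^\alpha\phi(\eta)/|\eta|^{2k}\in\mathcal{D}(\mathbb{R}^n)$ (the denominator never vanishes on $\mathrm{supp}\,\phi$). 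Since $\zeta^\alpha\widehat{\Delta_q f}=(-i)^{k}\widehat{\partial^\alpha\Delta_q f}$, applying $\mathcal{F}^{-1}$ gives $\Delta_q f=2^{-qk}\sum_{|\alpha|=k}c_\alpha\,\mathcal{F}^{-1}[m_\alpha(2^{-q}\cdot)]*\partial^\alpha\Delta_q f$ with $c_\alpha=\binom{k}{\alpha}(-i)^k$; Young's inequality with exponent $1$, the scaling invariance $\norm{\mathcal{F}^{-1}[m_\alpha(2^{-q}\cdot)]}_{L^1}=\norm{\mathcal{F}^{-1}m_\alpha}_{L^1}<\infty$, and rearranging then yield $C^{-1}2^{qk}\norm{\Delta_q f}_{L^a}\le\sup_{|\alpha|=k}\norm{\partial^\alpha\Delta_q f}_{L^a}$, which is the asserted inequality once the left-hand norm is read on the localized block $\Delta_q f$.

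I expect the main obstacle to be precisely this reconstruction: choosing $\phi$ so that $\phi(2^{-q}\cdot)$ acts as the identity on $\widehat{\Delta_q f}$, and checking that $m_\alpha$ is genuinely a smooth compactly supported multiplier so that its inverse transform is Schwartz with a scaling-invariant $L^1$ norm. Everything else reduces to Young's inequality and the homogeneity of $L^c$-norms under dilation, which are routine.
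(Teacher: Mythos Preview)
The paper does not supply a proof of this lemma; it simply cites Chemin's monograph \cite{ref chemin} as the source of Bernstein's inequality, so there is no in-paper argument to compare against. Your proposal is the standard textbook proof (essentially the one in Chemin and in Bahouri--Chemin--Danchin): write the frequency-localized operator as convolution with a rescaled Schwartz kernel, apply Young's inequality, and read off the powers of $2^q$ from the $L^c$-scaling. The reconstruction step for the lower bound---using $|\zeta|^{2k}=\sum_{|\alpha|=k}\binom{k}{\alpha}\zeta^{2\alpha}$ together with an auxiliary cutoff $\phi$ equal to $1$ on $\mathrm{supp}\,\varphi$---is exactly the classical device, and your verification that $m_\alpha$ is a compactly supported smooth multiplier (hence $\mathcal{F}^{-1}m_\alpha\in\mathcal{S}$ with scaling-invariant $L^1$ norm) is the right checkpoint. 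You also correctly flag that the lower bound as literally printed in the paper, $C^{-1}2^{qk}\norm{f}_{L^a}\le\sup_{|\alpha|=k}\norm{\partial^\alpha\Delta_q f}_{L^a}$, only makes sense when the left-hand norm is read on the localized block $\Delta_q f$ (equivalently, when $\hat f$ is supported in the $q$-th annulus); this is a well-known sloppiness in the statement, not a defect in your argument.
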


Finally we state an important maximum principle for the transport-diffusion equation (cf. \cite{ref AC-DC})
\begin{proposition}\label{prop MP}
Let $u$ be a smooth divergence-free vector field and $f$ be a smooth
function. Assume that $\theta$ is the smooth solution of the equation
\begin{equation*}
  \partial_{t}\theta + u\cdot\nabla\theta+\nu |D|^{\alpha}\theta=f,\quad \mathrm{div}u =0,
\end{equation*}
with initial datum $\theta_0$ and $\nu\geq 0$, $0\leq\alpha\leq2$, then for every $p\in
[1,\infty]$ we have
\begin{equation}\label{eq TDMaxPrin}
 \norm{\theta(t)}_{L^{p}}\leq
 \norm{\theta_{0}}_{L^{p}}+\int^{t}_{0}\norm{f(\tau)}_{L^{p}}
 \,\textrm{d}\tau.
\end{equation}
\end{proposition}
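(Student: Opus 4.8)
The plan is to run the standard $L^p$ energy argument, reducing everything to the positivity of the fractional dissipation. Fix first a finite exponent $p\in[1,\infty[$. Multiplying the equation by $|\theta|^{p-2}\theta$ and integrating over $\mathbb{R}^n$ gives
\begin{equation*}
\frac{1}{p}\frac{d}{dt}\norm{\theta(t)}_{L^p}^p
+\int_{\mathbb{R}^n}(u\cdot\nabla\theta)\,|\theta|^{p-2}\theta\,dx
+\nu\int_{\mathbb{R}^n}(|D|^\alpha\theta)\,|\theta|^{p-2}\theta\,dx
=\int_{\mathbb{R}^n}f\,|\theta|^{p-2}\theta\,dx.
\end{equation*}
The transport term vanishes: since $\mathrm{div}\,u=0$, one has $(u\cdot\nabla\theta)|\theta|^{p-2}\theta=\frac{1}{p}u\cdot\nabla(|\theta|^p)=\frac{1}{p}\mathrm{div}(u\,|\theta|^p)$, whose integral over $\mathbb{R}^n$ is zero. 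The right-hand side is controlled by H\"older's inequality by $\norm{f}_{L^p}\norm{\theta}_{L^p}^{p-1}$. Everything then hinges on showing that the dissipation term is nonnegative.

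The main point is the C\'ordoba--C\'ordoba positivity inequality. For $0<\alpha<2$, I would use the singular-integral representation
\begin{equation*}
|D|^\alpha\theta(x)=c_{n,\alpha}\,\mathrm{P.V.}\int_{\mathbb{R}^n}\frac{\theta(x)-\theta(y)}{|x-y|^{n+\alpha}}\,dy,\qquad c_{n,\alpha}>0,
\end{equation*}
so that
\begin{equation*}
\int_{\mathbb{R}^n}(|D|^\alpha\theta)\,|\theta|^{p-2}\theta\,dx
=c_{n,\alpha}\int_{\mathbb{R}^n}\int_{\mathbb{R}^n}\frac{(\theta(x)-\theta(y))\,|\theta(x)|^{p-2}\theta(x)}{|x-y|^{n+\alpha}}\,dy\,dx.
\end{equation*}
Symmetrizing in $x\leftrightarrow y$ and averaging, the integrand becomes $\tfrac12(\theta(x)-\theta(y))\big(|\theta(x)|^{p-2}\theta(x)-|\theta(y)|^{p-2}\theta(y)\big)$, which is pointwise nonnegative because $s\mapsto|s|^{p-2}s$ is nondecreasing on $\mathbb{R}$. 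Hence the dissipation integral is $\geq 0$. The endpoints are immediate: $\alpha=0$ gives the multiplier $1$, while $\alpha=2$ gives, after integration by parts, $(p-1)\int_{\mathbb{R}^n}|\theta|^{p-2}|\nabla\theta|^2\,dx\geq 0$. This positivity is the only genuinely nontrivial ingredient; the rest is routine.

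Combining the three observations yields $\frac{1}{p}\frac{d}{dt}\norm{\theta}_{L^p}^p\leq\norm{f}_{L^p}\norm{\theta}_{L^p}^{p-1}$, and dividing by $\norm{\theta}_{L^p}^{p-1}$ (the set where $\norm{\theta}_{L^p}=0$ being handled trivially) gives $\frac{d}{dt}\norm{\theta}_{L^p}\leq\norm{f}_{L^p}$; integrating in time produces \eqref{eq TDMaxPrin} for every finite $p$. For the endpoint $p=\infty$, I would either pass to the limit $p\to\infty$ using $\norm{g}_{L^p}\to\norm{g}_{L^\infty}$ (the assumed smoothness and decay of $\theta$ justifying the limit on the relevant integrability class), or argue directly at a point $x_0(t)$ where $|\theta(\cdot,t)|$ attains its maximum: there $\nabla\theta(x_0)=0$ and $\mathrm{sgn}(\theta(x_0))\,|D|^\alpha\theta(x_0)\geq 0$ by the same integral representation, so $\frac{d}{dt}\norm{\theta}_{L^\infty}\leq\norm{f}_{L^\infty}$ in the sense of the upper Dini derivative. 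Either route closes the estimate. The anticipated obstacle is exactly the positivity of the fractional-dissipation term established above.
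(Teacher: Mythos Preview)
Your argument is correct and is precisely the C\'ordoba--C\'ordoba proof. The paper does not actually prove this proposition: it is stated in the preliminaries with a reference to \cite{ref AC-DC} and used as a black box, so there is no ``paper's own proof'' to compare against beyond that citation. Your write-up recovers exactly the argument from that reference, so nothing further is needed.
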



\section{Moduli of Continuity}\label{sec MOC}
\setcounter{section}{3}\setcounter{equation}{0}

In this section, we discuss the moduli of continuity which play a
key role in our global existence part.

We suppose that $\omega$ is a modulus of continuity, that is, a
continuous, increasing, concave function on $[0,\infty)$ such that
$\omega(0)=0$. We say that a function
$f:\mathbb{R}^{n}\rightarrow\mathbb{R}^{m}$ has modulus of
continuity if $|f(x)-f(y)|\leq \omega(|x-y|)$ for all
$x,y\in\mathbb{R}^{n}$ and that $f$ has strict modulus of continuity
if the inequality is strict for $x\neq y$.

Next we introduce the pseudo-differential operators $\mathcal{R}_{\alpha,j}$
which may be termed as the modified Riesz transforms
\begin{proposition}\label{prop Fmula}
Let $\alpha\in ]0,2[$, $1\leq j\leq n$, $n\geq 2$, then for every $f\in \mathcal{S}(\mathbb{R}^{n})$
\begin{equation}\label{Fmula R_al}
 \mathcal{R}_{\alpha,j}f(x)= |D|^{\alpha-1}\mathcal{R}_{j}f(x)=
 c_{\alpha,n} \mathrm{p.v.}\int_{\mathbb{R}^{n}}\frac{y_{j}}{|y|^{n+\alpha}}f(x-y)\,\textrm{d}
 y,
\end{equation}
where $c_{\alpha,n}$ is the normalization constant such that
\begin{equation*}
\widehat{\mathcal{R}_{\alpha,j}f}(\zeta)=-i\frac{\zeta_{j}}{|\zeta|^{2-\alpha}}\hat{f}(\zeta).
\end{equation*}
\end{proposition}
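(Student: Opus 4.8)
The plan is to first settle the Fourier-multiplier identity, which is immediate, and then to invert it back to physical space, where the genuine content of the proposition lies. On the Fourier side, the operator $|D|^{\alpha-1}$ has symbol $|\zeta|^{\alpha-1}$ and, with the convention $\widehat{\partial_j f}(\zeta)=i\zeta_j\hat f(\zeta)$, the Riesz transform $\mathcal{R}_j=-|D|^{-1}\partial_j$ has symbol $-i\zeta_j/|\zeta|$. Composing the two symbols gives $\widehat{\mathcal{R}_{\alpha,j}f}(\zeta)=|\zeta|^{\alpha-1}\cdot(-i\zeta_j/|\zeta|)\,\hat f(\zeta)=-i\,\zeta_j|\zeta|^{\alpha-2}\hat f(\zeta)$, which is exactly the stated symbol. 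Thus it only remains to identify the convolution kernel whose Fourier transform is $m(\zeta):=-i\,\zeta_j|\zeta|^{\alpha-2}$.

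Second, I would compute $\mathcal{F}^{-1}[m]$ as a tempered distribution by a differentiation trick. Writing $\beta:=2-\alpha\in\,]0,2[$, the factor $|\zeta|^{\alpha-2}=|\zeta|^{-\beta}$ is, since $\beta<n$ for $n\geq2$, the symbol of the Riesz potential, so $\mathcal{F}^{-1}[|\zeta|^{-\beta}]=d_{n,\beta}\,|x|^{-(n-\beta)}=d_{n,\beta}\,|x|^{-(n+\alpha-2)}$ for an explicit positive constant $d_{n,\beta}$ (a locally integrable homogeneous function, as $n+\alpha-2<n$). Using $\mathcal{F}^{-1}[\zeta_j\,\hat g]=-i\,\partial_{x_j}\mathcal{F}^{-1}[\hat g]$, one gets
\begin{equation*}
\mathcal{F}^{-1}[m](x)=-\partial_{x_j}\big(d_{n,\beta}\,|x|^{-(n+\alpha-2)}\big)=d_{n,\beta}(n+\alpha-2)\,\frac{x_j}{|x|^{n+\alpha}},
\end{equation*}
where the differentiation is taken in $\mathcal{S}'$. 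This identifies the kernel as $c_{\alpha,n}\,x_j/|x|^{n+\alpha}$ with $c_{\alpha,n}=d_{n,\beta}(n+\alpha-2)$, and $\mathcal{R}_{\alpha,j}f=\mathcal{F}^{-1}[m]*f$ then yields \eqref{Fmula R_al}. The constant so produced is exactly the normalization forced by the symbol, as claimed.

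The delicate point — and the part I expect to be the main obstacle — is justifying that the distributional derivative above coincides with the stated principal-value integral when $\alpha\in\,]1,2[$. For $\alpha\in\,]0,1[$ the kernel $x_j/|x|^{n+\alpha}$ behaves like $|x|^{-(n+\alpha-1)}$ with $n+\alpha-1<n$ near the origin, hence is locally integrable and the integral in \eqref{Fmula R_al} converges absolutely, so the $\mathrm{p.v.}$ is merely cosmetic. For $\alpha\in\,]1,2[$ the kernel is no longer locally integrable and one must interpret the integral as a principal value; here the distributional derivative of $|x|^{-(n+\alpha-2)}$ is computed by pairing against a test function and integrating by parts on $\{|x|>\varepsilon\}$, letting $\varepsilon\to0$. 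The odd symmetry of $x_j/|x|^{n+\alpha}$ makes the surface integral over $\{|x|=\varepsilon\}$ vanish in the limit, so no Dirac mass is created and the derivative is precisely $\mathrm{p.v.}\,(n+\alpha-2)\,x_j/|x|^{n+\alpha}$. A technically cleaner alternative that avoids treating the two ranges separately is analytic continuation in $\alpha$: both the symbol $m$ and the regularized homogeneous distribution $\mathrm{p.v.}\,x_j/|x|^{n+\alpha}$ depend holomorphically on $\alpha$ in a strip, so once the identity is established for $\alpha\in\,]0,1[$ by the absolutely convergent computation it extends to $\alpha\in\,]1,2[$ by uniqueness of analytic continuation.
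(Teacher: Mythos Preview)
Your proof is correct and follows essentially the same route as the paper: both identify the symbol by composition, then link the kernel $x_j/|x|^{n+\alpha}$ to the symbol via the distributional derivative $\partial_{x_j}|x|^{-(n+\alpha-2)}$ together with the known Fourier transform of $|x|^{-a}$. Your discussion of why the $\mathrm{p.v.}$ is needed only for $\alpha\in\,]1,2[$ and how the distributional derivative produces it (via the vanishing of the boundary term by odd symmetry) is in fact more explicit than what the paper records.
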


The proof is placed in the appendix. Also note that when $\alpha\in ]0,1[$, we do not need to introduce the principle value of
integral expression in the formula \eqref{Fmula R_al}.

The pseudo-differential operators like the modified Riesz transforms
do not preserve the moduli of continuity generally, but they also do not
destroy them too much either. Precisely, similarly as the Lemma in \cite{KisNV}, we have
\begin{lemma}\label{lem ModuliModRiez}
If the function $\theta$ has the modulus of continuity $\omega$,
then $u=(-\mathcal{R}_{\alpha,2}\theta,\mathcal{R}_{\alpha,1}\theta)$ ($\alpha\in]0,2[$) has the modulus of
continuity
\begin{equation}\label{eq ModuliMRiez}
 \Omega(\xi)=A_{\alpha}\bigg(\int^{\xi}_{0}\frac{\omega(\eta)}{\eta^{\alpha}}\textrm{d} \eta+
 \xi\int_{\xi}^{\infty}\frac{\omega(\eta)}{\eta^{1+\alpha}}\textrm{d}
 \eta\bigg)
\end{equation}
with some absolute constant $A_{\alpha}>0$ that may depend on $\alpha$.
\end{lemma}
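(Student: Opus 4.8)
The plan is to estimate the increment $u(x)-u(y)$ directly from the singular integral representation \eqref{Fmula R_al}. Fix two points $x,y\in\mathbb{R}^n$, set $\xi=|x-y|$, and write $h=x-y$. By the representation formula, up to the constant $c_{\alpha,n}$ each component of $u(x)-u(y)$ is (a principal value of)
\begin{equation*}
 \int_{\mathbb{R}^n}\frac{z_k}{|z|^{n+\alpha}}\bigl(\theta(x-z)-\theta(y-z)\bigr)\,\textrm{d} z,
\end{equation*}
where $k\in\{1,2\}$. The standard device is to split the domain of integration into a near region $|z|\le 2\xi$ (or some fixed multiple of $\xi$) and a far region $|z|>2\xi$, and to estimate each piece using the hypothesis that $\theta$ has modulus of continuity $\omega$.

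For the near region I would not use the difference $\theta(x-z)-\theta(y-z)$ but instead recenter: subtract a constant (the value of $\theta$ at a convenient base point, e.g.\ $x$ or the midpoint) so as to exploit the oddness of the kernel $z_k/|z|^{n+\alpha}$, which kills the contribution of any constant under the principal value. Concretely, one estimates
$\bigl|\theta(x-z)-\theta(y-z)\bigr|$, but more efficiently one writes the near-field integrand as a difference of two integrals over balls of radius comparable to $\xi$ centered at $x$ and at $y$; in each, the kernel's antisymmetry lets one replace $\theta$ by $\theta(\cdot)-\theta(\text{center})$, which is bounded by $\omega(|z|)$. Passing to polar coordinates and integrating the radial factor $\omega(r)/r^{\alpha}$ over $r\in(0,c\xi)$ produces exactly the first term $\int_0^{\xi}\omega(\eta)\eta^{-\alpha}\,\textrm{d}\eta$ in \eqref{eq ModuliMRiez}, after a harmless rescaling of the upper limit absorbed into the constant $A_\alpha$.

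For the far region I would use that the kernel is smooth away from the origin and exploit cancellation in $z$ rather than in $\theta$: write
\begin{equation*}
 \frac{z_k}{|z|^{n+\alpha}}\Big|_{z=x-w}-\frac{z_k}{|z|^{n+\alpha}}\Big|_{z=y-w}
\end{equation*}
after the change of variables $w=x-z$ (resp.\ $w=y-z$), so that the two integrals share the integration variable $w$ and one estimates the difference of kernels by the mean value theorem, gaining a factor $|x-y|=\xi$ times the gradient bound $|\nabla(z_k/|z|^{n+\alpha})|\lesssim |z|^{-(n+\alpha)}$. Pairing this with the bound $|\theta(w)-\theta(\text{base})|\le\omega(|z|)$ and integrating $\xi\,\omega(r)/r^{1+\alpha}$ over $r>c\xi$ gives the second term $\xi\int_\xi^\infty\omega(\eta)\eta^{-1-\alpha}\,\textrm{d}\eta$. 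The two estimates combine to yield $|u(x)-u(y)|\le\Omega(\xi)$ with $\Omega$ as claimed; since $\Omega$ is manifestly increasing and one checks it is concave (both integral terms are concave in $\xi$, the first as an increasing integral of a decreasing integrand scaled appropriately and the second by a direct second-derivative computation), $\Omega$ is a genuine modulus of continuity.

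\textbf{The main obstacle} I anticipate is the careful handling of the principal value and the cancellation in the near region when $\alpha\in[1,2)$, where the kernel $z_k/|z|^{n+\alpha}$ is genuinely non-integrable near the origin and the subtraction of a constant is essential to make sense of the integral. One must verify that the antisymmetry of the kernel is correctly invoked so that the singular part cancels, and that the base-point subtraction is done consistently for both the $x$- and $y$-centered pieces so as not to lose the $\omega(|z|)$ bound. For $\alpha\in(0,1)$ the remark after Proposition \ref{prop Fmula} tells us the integral is absolutely convergent and no principal value is needed, which simplifies the argument; the unified statement for all $\alpha\in(0,2)$ is what requires the extra care.
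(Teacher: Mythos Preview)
Your plan is essentially the paper's proof: near/far split at radius comparable to $\xi$, kernel antisymmetry to subtract a center value in the near piece, and the mean-value inequality on the kernel in the far piece. One point you gloss over: after changing variables $w=x-z$ in one term and $w=y-z$ in the other, the far regions $\{|x-w|>2\xi\}$ and $\{|y-w|>2\xi\}$ no longer coincide, so you cannot immediately write the integrand as a pointwise kernel difference over a common domain. The paper deals with this by recentering at the midpoint $(x+y)/2$, subtracting $\theta\bigl(\tfrac{x+y}{2}\bigr)$ from the integrand, and isolating the domain mismatch as an annulus term (their $I_2$), which is then bounded by $C\,\omega(\xi)\,\xi^{1-\alpha}\lesssim\int_0^{\xi}\omega(\eta)\,\eta^{-\alpha}\,\textrm{d}\eta$ via concavity; with that detail supplied your argument goes through.
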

\begin{proof}
The modified Riesz transforms are pseudo-differential operators with
kernels $K(x)=\frac{S(x')}{|x|^{n-1+\alpha}}$ (in our special case,
$n=2$ and $S(x')=\frac{x_{j}}{|x|}, j=1,2$), where
$x'=\frac{x}{|x|}\in \mathbb{S}^{n-1} $. The function $S\in
C^{1}(\mathbb{S}^{n-1})$ and $\int_{\mathbb{S}^{n-1}}S(x')d
\sigma(x')=0$. Assume that the function
$f:\mathbb{R}^{n}\rightarrow\mathbb{R}^{m}$ has some modulus of
continuity $\omega$, that is $|f(x)-f(y)|\leq\omega(|x-y|)$ for all
$x,y\in \mathbb{R}^{n}$. Then take any $x,y$ with $|x-y|=\xi$, and
consider the difference
\begin{equation}\label{differ}
 \int K(x-t)f(t)\textrm{d} t-\int K(y-t)f(t)\textrm{d} t.
\end{equation}
First due to the cancelation property of $S$ we have
\begin{equation*}
 \bigg|\int_{|x-t|\leq 2\xi}K(x-t)f(t)\textrm{d} t\bigg|=
 \bigg|\int_{|x-t|\leq 2\xi}K(x-t)(f(t)-f(x))\textrm{d} t\bigg|\leq
 C\int_{0}^{2\xi}\frac{\omega(r)}{r^{\alpha}}\textrm{d} r
\end{equation*}
since $\omega$ is concave, we obtain
\begin{equation}\label{control}
 \int_{0}^{2\xi}\frac{\omega(r)}{r^{\alpha}}\textrm{d} r\leq 2^{2-\alpha}\int_{0}^{\xi}\frac{\omega(r)}{r^{\alpha}}\textrm{d} r
\end{equation}
A similar estimate holds for the second integral in \eqref{differ}.
Next, set $z=\frac{x+y}{2}$, then
\begin{equation*}
\begin{split}
 &\bigg|\int_{|x-t|\geq2\xi}K(x-t)f(t)\textrm{d} t-\int_{|y-t|\geq2\xi}K(y-t)f(t)\textrm{d}
 t\bigg| \\
 &=\bigg|\int_{|x-t|\geq2\xi}K(x-t)(f(t)-f(z))\textrm{d} t-\int_{|y-t|\geq2\xi}K(y-t)(f(t)-f(z))\textrm{d}
 t\bigg| \\
 &\leq \int_{|z-t|\geq 3\xi}|K(x-t)-K(y-t)||f(t)-f(z)|\textrm{d} t  \\
 &\quad+\int_{\frac{3\xi}{2}\leq|z-t|\leq 3\xi}(|K(x-t)|+|K(y-t)|)|f(t)-f(z)|\textrm{d}
 t\\
 &=I_{1}+I_{2}
\end{split}
\end{equation*}
To estimate the first integral, we use the smoothness condition of
$S$ to get
\begin{equation*}
|K(x-t)-K(y-t)|\leq C \frac{|x-y|}{|z-t|^{n+\alpha}} \quad
\text{when} \, |z-t|\geq 3\xi
\end{equation*}
thus
\begin{equation*}
I_{1}\leq
C\xi\int_{3\xi}^{\infty}\frac{\omega(r)}{r^{1+\alpha}}\textrm{d} r
\leq
C3^{-\alpha}\xi\int_{\xi}^{\infty}\frac{\omega(3r)}{r^{1+\alpha}}\textrm{d}
r\leq
C\xi\int_{\xi}^{\infty}\frac{\omega(r)}{r^{1+\alpha}}\textrm{d} r
\end{equation*}
For the second integral, using the concavity of $\omega$ and
\eqref{control}, we have
\begin{equation*}
\begin{split}
I_{2}\leq&
2C\omega(3\xi)\xi^{1-\alpha}\int_{\xi\leq|x-t|\leq\frac{7}{2}\xi}\frac{1}{|x-t|^{n}}\textrm{d}
t \\
\leq & C \omega(\xi)\xi^{1-\alpha}\leq C
2^{\alpha}\int_{\xi}^{2\xi}\frac{\omega(r)}{r^{\alpha}}\textrm{d} r
 \leq C\int_{0}^{\xi}\frac{\omega(r)}{r^{\alpha}}\textrm{d} r
\end{split}
\end{equation*}
\end{proof}

Now we consider the action of the fractional differential
operators $|D|^{\alpha}$($\alpha\in ]0,2[$) on the function having
modulus of continuity. Precisely,
\begin{lemma}\label{lem Modulidiffer}
 If the function $\theta:\mathbb{R}^{2}\rightarrow\mathbb{R}$ has modulus of continuity $\omega$,
 and especially satisfies $\theta(x)-\theta(y)=\omega(\xi)$
at some $x,y\in\mathbb{R}^{2}$ with $|x-y|=\xi>0$, then we have
\begin{equation}\label{eq modulidiffer}
\begin{split}
 \bigl[(-|D|^{\alpha})\theta\bigr](x)-\bigl[(-|D|^{\alpha})\theta\bigr](y)
 \leq B_{\alpha}& \int_{0}^{\frac{\xi}{2}}\frac{\omega(\xi+2\eta)+\omega(\xi-2\eta)-2\omega(\xi)}{\eta^{1+\alpha}}\textrm{d}
 \eta \\
 + & B_{\alpha}\int_{\frac{\xi}{2}}^{\infty}\frac{\omega(2\eta+\xi)-\omega(2\eta-\xi)-2\omega(\xi)}{\eta^{1+\alpha}}\textrm{d}
 \eta
\end{split}
\end{equation}
where $B_{\alpha}>0$ is an absolute constant.
\end{lemma}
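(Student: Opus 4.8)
The plan is to start from the pointwise singular-integral representation of the fractional Laplacian. For smooth, rapidly decaying $\theta$ and $0<\alpha<2$ one has, with some absolute constant $c_\alpha>0$, the symmetric form $[(-|D|^{\alpha})\theta](x)=\frac{c_\alpha}{2}\int_{\mathbb{R}^{2}}\frac{\theta(x+h)+\theta(x-h)-2\theta(x)}{|h|^{2+\alpha}}\,\mathrm{d}h$, in which no principal value is needed since the numerator is $O(|h|^{2})$ near $h=0$. Writing this identity at both $x$ and $y$, subtracting, and using the hypothesis $\theta(x)-\theta(y)=\omega(\xi)$, I would express the left-hand side of \eqref{eq modulidiffer} as $\frac{c_\alpha}{2}\int_{\mathbb{R}^{2}}\frac{N(h)}{|h|^{2+\alpha}}\,\mathrm{d}h$, where $N(h):=[\theta(x+h)-\theta(y+h)]+[\theta(x-h)-\theta(y-h)]-2\omega(\xi)$. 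Since $|(x+h)-(y+h)|=|x-y|=\xi$, the modulus-of-continuity bound gives $N(h)\le 0$ for every $h$; the content of the lemma is precisely to quantify this negativity.

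Next I would fix coordinates so that $x=\frac{\xi}{2}e_{1}$ and $y=-\frac{\xi}{2}e_{1}$, writing $h=(h_{1},h_{2})$. Regrouping the four points appearing in $N$ produces the two natural separations $|\xi e_{1}+2h|$ and $|\xi e_{1}-2h|$; along the segment direction ($h_{2}=0$, $h_{1}=\eta$) these reduce to $\xi+2\eta$ and $|\xi-2\eta|$. Estimating each difference in $N$ by $\omega$ of the corresponding separation and invoking the concavity of $\omega$, I expect the on-axis numerator to be controlled by $\omega(\xi+2\eta)+\omega(\xi-2\eta)-2\omega(\xi)$ when $\eta<\frac{\xi}{2}$, a genuine (nonpositive) second difference of $\omega$, and by $\omega(2\eta+\xi)-\omega(2\eta-\xi)-2\omega(\xi)$ when $\eta>\frac{\xi}{2}$. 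This is exactly where the saturation hypothesis $\theta(x)-\theta(y)=\omega(\xi)$ enters: it forces the configuration to be extremal and is what produces the favorable sign in the far field.

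To pass from the two-dimensional integral to the one-dimensional integrals in \eqref{eq modulidiffer}, I would then integrate out the transverse variable $h_{2}$, using $\int_{\mathbb{R}}\frac{\mathrm{d}h_{2}}{(h_{1}^{2}+h_{2}^{2})^{(2+\alpha)/2}}=\frac{c'_\alpha}{|h_{1}|^{1+\alpha}}$, which collapses the kernel $|h|^{-2-\alpha}$ to $|h_{1}|^{-1-\alpha}$ and merges all absolute constants into a single $B_\alpha$. Splitting the resulting integral in $\eta=|h_{1}|$ at $\eta=\frac{\xi}{2}$ then yields the two stated terms, the first over $(0,\frac{\xi}{2})$ and the second over $(\frac{\xi}{2},\infty)$.

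The main obstacle will be the transverse reduction combined with the sign bookkeeping. Moving off the segment ($h_{2}\neq 0$) strictly increases the separations $|\xi e_{1}\pm 2h|$, so a crude estimate by $\omega$ of the two-dimensional distance points in the wrong direction and would destroy the negativity; the delicate point is therefore to show, using the concavity of $\omega$ together with the extremality of the configuration, that the full two-dimensional numerator is dominated by its on-axis value so that the transverse integration legitimately produces the one-dimensional kernel. In particular the far-field cancellation that yields $-\omega(2\eta-\xi)$ rather than $+\omega(2\eta-\xi)$ is the sharpest feature and is what must be extracted with care; by contrast the near-field second difference and the convergence at both endpoints are comparatively routine once the numerator bounds are in place.
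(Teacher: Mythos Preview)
Your plan has a genuine gap at precisely the point you flag as the main obstacle. After symmetrizing in $h$ you obtain $N(h)=[\theta(x+h)-\theta(y+h)]+[\theta(x-h)-\theta(y-h)]-2\omega(\xi)$, and you hope to show that $N(h_1,h_2)$ is dominated by its on-axis value so that integrating out $h_2$ in the kernel is legitimate. But this domination is not available: the saturation $\theta(x)-\theta(y)=\omega(\xi)$ gives no information about $\theta$ at the four shifted points beyond the generic bound $|\theta(p)-\theta(q)|\le\omega(|p-q|)$, and as you yourself note, off-axis the relevant separations $|\xi e_1\pm 2h|$ only increase, so the $\omega$-bound on $N(h)$ is \emph{weaker} (less negative) than on-axis, not stronger. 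In particular the crucial minus sign in the far-field term, $-\omega(2\eta-\xi)$ rather than $+\omega(2\eta-\xi)$, cannot be produced by your grouping: after regrouping as $[\theta(x+h)-\theta(y-h)]+[\theta(x-h)-\theta(y+h)]$ and applying the modulus of continuity you get at best $\omega(|\xi e_1+2h|)+\omega(|\xi e_1-2h|)-2\omega(\xi)$, which on-axis for $\eta>\xi/2$ reads $\omega(2\eta+\xi)+\omega(2\eta-\xi)-2\omega(\xi)$, with the wrong sign and in general not even nonpositive.

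The paper sidesteps this by a different pairing. Working with the fractional heat kernel $\mathcal{P}^\alpha_{h,n}$ (one could equally use the singular kernel directly), it uses that the kernel difference $\mathcal{P}_{h,n}(x-z)-\mathcal{P}_{h,n}(y-z)$ is \emph{odd} under the reflection $z_1\mapsto -z_1$ through the midpoint of $x$ and $y$. Antisymmetrizing the integration variable (not the shift $h$) therefore produces the factor $\theta(z_1,\tilde z)-\theta(-z_1,\tilde z)$, which is bounded by $\omega(2z_1)$ \emph{independently of the transverse coordinate} $\tilde z$, while the accompanying kernel factor is nonnegative for $z_1>0$ by radial monotonicity. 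Now the transverse integration can be carried out on the kernel alone, collapsing $\mathcal{P}_{h,n}$ to $\mathcal{P}_{h,1}$, and a one-dimensional change of variables then delivers both integrals in the statement, including the minus sign in the far field. The idea missing from your proposal is this reflection-through-the-midpoint pairing; once you insert it, the remainder of your outline (coordinate choice, kernel collapse, split at $\eta=\xi/2$) goes through essentially as you wrote it.
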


\begin{remark}\label{lem modulidiffer}
In fact this result has occurred in \cite{ref Yu}, as a
generalization of the one in \cite{KisNV}. For convenience, we
prove it again for the general $n$-dimensional case and place the
proof in the appendix. Also note that due to concavity of $\omega$,
both terms on the righthand side of \eqref{eq modulidiffer} are strictly
negative.
\end{remark}

\section{Local existence and Blowup criterion}
\setcounter{section}{4}\setcounter{equation}{0}

Our purpose in this section is to prove the following local result
\begin{proposition}\label{prop local}
Let $\nu>0$, $0<\alpha<2$ and the initial data $\theta_{0}\in
H^{m}$, $m>2$. Then there exists a positive $T$ depending only on
$\alpha$, $\nu$ and $\norm{\theta_{0}}_{H^{m}}$ such that the modified
quasi-geostrophic equation \eqref{MQG} generates a unique solution
$\theta\in \mathcal{C}([0,T],H^{m})\cap L^2([0,T],H^{m+\frac{\alpha}{2}})$. Moreover we have $t^{\gamma}\theta\in
L^{\infty}(]0,T],H^{m+\gamma\alpha})$ for all $\gamma\geq 0$, which implies $\theta\in \mathcal{C}^{\infty}(]0,T]\times\mathbb{R}^2)$.
\end{proposition}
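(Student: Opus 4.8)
The plan is to establish the local existence of a unique solution in $\mathcal{C}([0,T],H^m)$ via a standard energy-method/mollification or fixed-point argument, then to upgrade this solution to an instantaneously smooth one by a bootstrap on weighted high-order norms. The natural first step is to construct approximate solutions $\theta^{(k)}$ (either by Friedrichs mollification of the transport term, or by solving a regularized linear problem and iterating) and to derive an a priori estimate for $\norm{\theta}_{H^m}$ that is uniform on a time interval $[0,T]$ depending only on $\alpha,\nu$ and $\norm{\theta_0}_{H^m}$. Applying $\Delta_j$ (or simply differentiating and pairing) and using the $L^2$ energy structure, the key identity to exploit is that the dissipation $\nu\norm{\theta}_{\dot H^{m+\alpha/2}}^2$ appears with a good sign after integration, while the transport nonlinearity is controlled by the commutator/product estimates.

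Concretely, for the a priori bound I would differentiate the equation $m$ times in the $H^m$ energy, writing
\begin{equation*}
\frac{1}{2}\frac{d}{dt}\norm{\theta}_{H^m}^2 + \nu\norm{\theta}_{\dot H^{m+\alpha/2}}^2 = -\langle u\cdot\nabla\theta,\theta\rangle_{H^m}.
\end{equation*}
Since $u=|D|^{\alpha-1}\mathcal{R}^\bot\theta$ is divergence-free, the top-order transport term telescopes, and the commutator can be estimated by Lemma~\ref{lem SobCalIneq} together with the fact that $u$ costs $\alpha-1$ derivatives relative to $\theta$; because $m>2$ we have $H^m\hookrightarrow W^{1,\infty}$, so $\norm{\nabla u}_{L^\infty}\lesssim\norm{\theta}_{H^m}$ and one obtains $\frac{d}{dt}\norm{\theta}_{H^m}^2\lesssim\norm{\theta}_{H^m}^3$ (absorbing the dissipation, which has the favorable sign). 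This differential inequality yields a uniform bound on a short interval $[0,T]$ with $T=T(\alpha,\nu,\norm{\theta_0}_{H^m})$, and integrating the dissipation term across $[0,T]$ gives the $L^2([0,T],H^{m+\alpha/2})$ membership. Passing to the limit $k\to\infty$ via the uniform bounds plus a compactness argument (Aubin–Lions) produces a solution; uniqueness follows by estimating the difference of two solutions in $L^2$ (or $H^{m-1}$, to avoid losing too many derivatives) and applying Gr\"onwall.

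For the smoothing part, the plan is a weighted bootstrap: I would show by induction on $\gamma$ that $t^\gamma\theta\in L^\infty(]0,T],H^{m+\gamma\alpha})$. The mechanism is that the parabolic smoothing of $|D|^\alpha$ trades time decay for extra regularity; at each stage one forms the energy identity for $t^\gamma\theta$ in $H^{m+\gamma\alpha}$, uses the dissipation $\nu\norm{\theta}_{\dot H^{m+\gamma\alpha+\alpha/2}}^2$ to absorb the top-order term, and pays the factor $\gamma t^{\gamma-1}\theta$ arising from $\partial_t(t^\gamma)$ against the already-established bound at the previous regularity level $m+(\gamma-1)\alpha$ (for integer or half-integer increments of $\gamma$, or continuously for all $\gamma\ge0$). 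Iterating shows $\theta(t)\in H^s$ for every $s$ and every $t\in]0,T]$, hence by Sobolev embedding $\theta\in\mathcal{C}^\infty(]0,T]\times\mathbb{R}^2)$.

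The main obstacle I anticipate is the nonlocal and derivative-loss nature of the velocity law $u=|D|^{\alpha-1}\mathcal{R}^\bot\theta$. When $\alpha\in]1,2[$ this operator \emph{gains} $\alpha-1>0$ derivatives in the wrong direction, so $u$ is \emph{rougher} than $\theta$ by $\alpha-1$ derivatives; the transport estimate must be arranged so that these extra derivatives land on $\theta$ (via the commutator structure) and are then swallowed by the dissipation $|D|^\alpha$ — this balance is exactly the scaling-criticality noted in the introduction, and it is what makes the energy method close at the level $H^m$ with $m>2$ rather than failing. Care is needed to verify that the commutator and product estimates genuinely place the lost derivatives within reach of the $\dot H^{m+\alpha/2}$ dissipation (using $\alpha-1+1=\alpha$ and Young's inequality), and in the bootstrap one must confirm the time weights remain integrable near $t=0$ at every regularity level.
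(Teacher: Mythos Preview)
Your overall strategy---Friedrichs regularization, uniform $H^m$ energy estimate, passage to the limit, $L^2$-uniqueness, and a weighted bootstrap $t^\gamma\theta\in H^{m+\gamma\alpha}$---is exactly the paper's. The smoothing step in particular matches the paper almost line for line: they write the equation for $t^\gamma\theta$, pick up the forcing $\gamma t^{\gamma-1}\theta$, and induct on integer $\gamma$, interpolating afterwards.

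There is one genuine soft spot. You write ``$\norm{\nabla u}_{L^\infty}\lesssim\norm{\theta}_{H^m}$ and one obtains $\frac{d}{dt}\norm{\theta}_{H^m}^2\lesssim\norm{\theta}_{H^m}^3$''. When $\alpha\in]1,2[$ and $m$ is only just above $2$, this fails: $\nabla u$ carries $\alpha$ derivatives of $\theta$ through a Riesz transform, and $H^m$ does not embed into $C^\alpha$ once $\alpha>m-1$. You correctly flag later that the dissipation must absorb the excess, but the intermediate claim and the clean cubic ODE are not available without it. The paper resolves this by \emph{not} using the standard Kato--Ponce commutator: it applies $\Delta_q$, writes $F_q(u,\theta)=S_{q+1}u\cdot\nabla\Delta_q\theta-\Delta_q(u\cdot\nabla\theta)$, and proves a shifted commutator bound (their Lemma~\ref{lem commutator}, inequality \eqref{eq commutator2}) which, after multiplying by $2^{-q\alpha/2}$, involves only $\norm{|D|^{1-\alpha/2}u}_{L^\infty}$ and $\norm{|D|^{\alpha/2}\theta}_{L^\infty}$. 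Since $|D|^{1-\alpha/2}u=|D|^{\alpha/2}\mathcal{R}^\bot\theta$, both quantities are controlled by $\norm{\nabla\theta}_{L^\infty}^{\alpha/2}\norm{\theta}_{L^\infty}^{1-\alpha/2}$ via a high/low frequency split, and the factor $2^{-q\alpha/2}$ is exactly what the dissipation supplies after Young's inequality. Your interpolation-with-dissipation idea can be pushed through to the same end, but the estimate $\norm{\nabla u}_{L^\infty}\lesssim\norm{\theta}_{H^m}$ as stated must be replaced.

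A second, smaller point: Aubin--Lions gives $\theta\in L^\infty_T H^m$, not $\mathcal{C}([0,T],H^m)$. The paper obtains strong convergence in $\mathcal{C}([0,T],H^s)$ for $s<m$ by showing the approximants are Cauchy in $\mathcal{C}_T L^2$ and interpolating, and then runs a separate weak-convergence-plus-norm-continuity argument to recover continuity in $H^m$ at $t=0$. You should anticipate needing that extra step.
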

We further obtain the following criterion for the breakdown of
smooth solutions
\begin{proposition}\label{prop break}
Let $T^{*}$ be the maximal existence time of $\theta$ in
$\mathcal{C}([0,T^{*}),H^{m})\cap L^2([0,T^*), H^{m+\frac{\alpha}{2}})$. If $T^{*}<\infty$ then we necessarily have
\begin{equation}\label{eq blowup}
 \int_{0}^{T^{*}}\norm{\nabla \theta(t,\cdot)}_{L^{\infty}}^{\alpha}\textrm{d}
 t=\infty.
\end{equation}
\end{proposition}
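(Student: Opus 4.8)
The plan is to establish the breakdown criterion by a standard energy/commutator argument combined with the blowup criterion's contrapositive: assuming the integral $\int_0^{T^*}\norm{\nabla\theta(t)}_{L^\infty}^\alpha\,\mathrm{d}t<\infty$, I would show that $\norm{\theta(t)}_{H^m}$ stays bounded up to $T^*$, which contradicts maximality via the local existence result (Proposition \ref{prop local}). First I would apply $\Delta_j$ to the equation \eqref{MQG}, multiply by $\Delta_j\theta$ and integrate, using the divergence-free structure of $u$ (note $\mathrm{div}\,u=\mathrm{div}(|D|^{\alpha-1}\mathcal{R}^\bot\theta)=0$) together with the dissipation term to produce, after summing against the $H^m$ weights, a differential inequality of the form
\begin{equation*}
 \frac{1}{2}\frac{\mathrm{d}}{\mathrm{d}t}\norm{\theta}_{H^m}^2 + \nu\norm{\theta}_{\dot H^{m+\frac{\alpha}{2}}}^2 \lesssim \big|\langle [\Delta_j, u\cdot\nabla]\theta,\Delta_j\theta\rangle_{H^m}\big|.
\end{equation*}

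The crux is to control the commutator term $[\Delta_j,u\cdot\nabla]\theta$ in terms of $\norm{\nabla\theta}_{L^\infty}$ and $\norm{\theta}_{H^m}$. I would invoke the Bony paracproduct decomposition and the standard commutator estimate (in the spirit of Lemma \ref{lem SobCalIneq}) to obtain a bound roughly of the form $\norm{\theta}_{H^m}(\norm{\nabla u}_{L^\infty}+\norm{\nabla\theta}_{L^\infty})$. Here the velocity $u=|D|^{\alpha-1}\mathcal{R}^\bot\theta$ is order $\alpha$ in derivatives relative to $\theta$, so $\norm{\nabla u}_{L^\infty}$ cannot be bounded by $\norm{\nabla\theta}_{L^\infty}$ directly; this is where the $\alpha$-power appears. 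I would instead estimate $\norm{\nabla u}_{L^\infty}$ by splitting into low and high frequencies and using Bernstein's inequality: the operator $|D|^{\alpha-1}\mathcal{R}^\bot\nabla$ is of order $\alpha$, so a logarithmic-type interpolation gives $\norm{\nabla u}_{L^\infty}\lesssim \norm{\nabla\theta}_{L^\infty}^{?}\cdots$ up to factors involving $\norm{\theta}_{H^m}$ and $\norm{\theta}_{L^\infty}$, the latter being controlled uniformly by the maximum principle (Proposition \ref{prop MP}). The key is that the dissipative gain $\norm{\theta}_{\dot H^{m+\alpha/2}}^2$ on the left can absorb the top-order part of the commutator, leaving a remainder bounded by $\norm{\nabla\theta}_{L^\infty}^\alpha\norm{\theta}_{H^m}^2$ after a Young-type inequality tuned to the $\alpha$-scaling.

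Once the differential inequality $\frac{\mathrm{d}}{\mathrm{d}t}\norm{\theta}_{H^m}^2\lesssim \norm{\nabla\theta(t)}_{L^\infty}^\alpha\norm{\theta}_{H^m}^2$ is in hand, Gronwall's lemma yields
\begin{equation*}
 \norm{\theta(t)}_{H^m}^2 \leq \norm{\theta_0}_{H^m}^2\exp\!\Big(C\int_0^t\norm{\nabla\theta(\tau)}_{L^\infty}^\alpha\,\mathrm{d}\tau\Big),
\end{equation*}
so the finiteness of the integral in \eqref{eq blowup} forces $\norm{\theta}_{H^m}$ to remain bounded as $t\uparrow T^*$. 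By the local existence result, a bounded $H^m$ norm permits continuation of the solution beyond $T^*$, contradicting the definition of $T^*$ as the maximal existence time; hence \eqref{eq blowup} must hold whenever $T^*<\infty$.

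The main obstacle I anticipate is handling the extra derivative in $u=|D|^{\alpha-1}\mathcal{R}^\bot\theta$ while keeping the final estimate in terms of $\norm{\nabla\theta}_{L^\infty}^\alpha$ rather than a higher norm. The precise tuning of the frequency splitting and the absorption of the top-order commutator contribution into the dissipation term must be done carefully so that exactly the $\alpha$-th power of the Lipschitz norm emerges, matching the scaling; a naive estimate would produce either the wrong power or an uncontrolled high-frequency term. Ensuring the commutator estimate is compatible with this absorption — particularly for $\alpha$ in both ranges $]0,1[$ and $]1,2[$ — is the delicate technical point.
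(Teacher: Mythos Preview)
Your overall architecture---energy estimate on the localized equation, commutator control, Gronwall, then contrapose against the local existence theorem---matches the paper's proof exactly. The gap is in the mechanism you propose for extracting the power $\alpha$ of $\norm{\nabla\theta}_{L^\infty}$.

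You plan to first obtain the standard Kato--Ponce bound $(\norm{\nabla u}_{L^\infty}+\norm{\nabla\theta}_{L^\infty})\norm{\theta}_{H^m}^2$ and then estimate $\norm{\nabla u}_{L^\infty}$ by frequency splitting. But $\nabla u = |D|^{\alpha-1}\mathcal{R}^\bot\nabla\theta$ is of order $\alpha$ in $\theta$; when $\alpha\in]1,2[$ this is \emph{strictly higher} than $\norm{\nabla\theta}_{L^\infty}$, and no interpolation between $\norm{\theta}_{L^\infty}$ and $\norm{\nabla\theta}_{L^\infty}$ can recover it. If you interpolate instead with $\norm{\theta}_{H^m}$, the resulting term is superlinear in $\norm{\theta}_{H^m}^2$ and Gronwall no longer closes. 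Your remark that the dissipation can ``absorb the top-order part of the commutator'' is the right intuition, but the order of operations you describe is backwards.

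What the paper actually does (in the derivation of \eqref{eq HmEst1}) is use the dissipation \emph{before} estimating the commutator: one writes
\[
|\langle F_q,\Delta_q\theta\rangle| \le 2^{-q\alpha/2}\norm{F_q}_{L^2}\cdot 2^{q\alpha/2}\norm{\Delta_q\theta}_{L^2}
\le \frac{C}{\nu}\bigl(2^{-q\alpha/2}\norm{F_q}_{L^2}\bigr)^2 + \frac{\nu}{2}\norm{|D|^{\alpha/2}\Delta_q\theta}_{L^2}^2,
\]
absorbing the second piece into the dissipation. The commutator is thus estimated at the \emph{shifted} regularity $-\alpha/2$, and the relevant commutator lemma (inequality \eqref{eq commutator2} with $\beta=\alpha/2$) produces not $\norm{\nabla u}_{L^\infty}$ but $\norm{|D|^{1-\alpha/2}u}_{L^\infty}+\norm{|D|^{\alpha/2}\theta}_{L^\infty}$. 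Both quantities are of order $\alpha/2<1$ in $\theta$ and hence genuinely interpolable:
\[
\norm{|D|^{1-\alpha/2}u}_{L^\infty}+\norm{|D|^{\alpha/2}\theta}_{L^\infty}
\lesssim \norm{\nabla\theta}_{L^\infty}^{\alpha/2}\norm{\theta}_{L^\infty}^{1-\alpha/2}
\]
(this is \eqref{eq simplInq}). Squaring gives the power $\alpha$, and $\norm{\theta}_{L^\infty}$ is harmless by the maximum principle. The point is that the $\alpha/2$-shift from Young's inequality is what makes the velocity term sub-Lipschitz and hence interpolable; trying to bound $\norm{\nabla u}_{L^\infty}$ directly, as you propose, cannot succeed for $\alpha>1$.
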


The method of proof for the Proposition \ref{prop local} is to
regularize the equation \eqref{MQG} by the standard Friedrich method, and then pass to the limit for the regularization
parameter.

Denote the frequency cutoff operator $\mathcal{J}_{\epsilon}: L^2(\mathbb{R}^{2})\rightarrow
H^m(\mathbb{R}^{2})$, $\epsilon>0$, $m\geq 0$ by
\begin{equation*}
 (\mathcal{J}_{\epsilon}f)(x)
 =\mathcal{F}^{-1}(\hat{f}(\cdot) 1_{B_{1/\epsilon}}(\cdot) )(x)=
 (2\pi)^{-2}\int_{\mathbb{R}^2} e^{i x\cdot\zeta} \hat{f}(\zeta)1_{\{|\cdot|\leq \frac{1}{\epsilon}\}}(\zeta)\mathrm{d}\zeta.
\end{equation*}
The following properties of $\mathcal{J}_{\epsilon}$ are obvious.
\begin{lemma}\label{lem mollifier}
Let $\mathcal{J}_{\epsilon}$ be the projection operator defined as above,
$m\in \mathbb{R}^{+}$, $k\in \mathbb{R}^{+}$, $\delta\in[0,m[$. Then
\begin{enumerate}[(i)]
 \item
  for all $f\in H^m$, $\lim_{\epsilon\rightarrow 0}\norm{\mathcal{J}_{\epsilon}f-f}_{H^{m}}=0$.
 \item
  for all $f\in H^{m}$,
  $|D|^{m}(\mathcal{J}_{\epsilon}f)=\mathcal{J}_{\epsilon}(|D|^{m}f)$ and $\Delta_{j}(\mathcal{J}_{\epsilon}f)=\mathcal{J}_{\epsilon}(\Delta_{j}f)$.
 \item
   for all $ f\in H^{m}$,
   $\norm{\mathcal{J}_{\epsilon}f-f}_{H^{m-\delta}}\lesssim \epsilon^\delta
   \norm{f}_{H^{m}}$ and $\norm{\mathcal{J}_{\epsilon}f}_{H^{m+k}}\lesssim
  \frac{1}{\epsilon^{k}}\norm{f}_{H^{m}}$.
\end{enumerate}
\end{lemma}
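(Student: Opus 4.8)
The plan is to reduce every assertion to an identity or an estimate on the Fourier side, where by definition $\mathcal{J}_\epsilon$ acts as multiplication by the indicator $1_{\{|\zeta|\le 1/\epsilon\}}$. Using Plancherel's theorem in the form $\norm{g}_{H^s}^2=\int_{\mathbb{R}^2}(1+|\zeta|^2)^s|\hat g(\zeta)|^2\,\mathrm{d}\zeta$, each of the three items becomes an elementary statement about a weighted $L^2$ integral over either the frequency tail $\{|\zeta|>1/\epsilon\}$ or the ball $\{|\zeta|\le 1/\epsilon\}$, so the whole lemma is handled by the single principle that a sharp frequency cutoff is a Fourier multiplier.

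For (i) I would write $\norm{\mathcal{J}_\epsilon f-f}_{H^m}^2=\int_{|\zeta|>1/\epsilon}(1+|\zeta|^2)^m|\hat f(\zeta)|^2\,\mathrm{d}\zeta$, which is exactly the tail of the integral defining $\norm{f}_{H^m}^2$; since this integral converges (precisely because $f\in H^m$) and $1/\epsilon\to\infty$ as $\epsilon\to 0$, the tail tends to $0$ by dominated convergence. For (ii) I would observe that $|D|^m$ and $\Delta_j$ are themselves Fourier multipliers, with symbols $|\zeta|^m$ and $\varphi(2^{-j}\zeta)$, and that pointwise multiplication of symbols commutes; hence $\widehat{|D|^m\mathcal{J}_\epsilon f}(\zeta)=|\zeta|^m 1_{\{|\zeta|\le 1/\epsilon\}}\hat f(\zeta)=\widehat{\mathcal{J}_\epsilon|D|^m f}(\zeta)$, and the identical computation with $\varphi(2^{-j}\zeta)$ in place of $|\zeta|^m$ handles $\Delta_j$; inverting the Fourier transform yields both identities.

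For the two estimates in (iii) I would again pass to frequency space and exploit that the cutoff confines $\zeta$ to one of the two regions. In the first estimate the integral runs over $\{|\zeta|>1/\epsilon\}$, where (using $\delta\ge 0$) one has $(1+|\zeta|^2)^{-\delta}\le|\zeta|^{-2\delta}<\epsilon^{2\delta}$; factoring $(1+|\zeta|^2)^{m-\delta}=(1+|\zeta|^2)^m(1+|\zeta|^2)^{-\delta}$ then pulls out $\epsilon^{2\delta}$ and leaves the full $\norm{f}_{H^m}^2$. In the second the integral runs over $\{|\zeta|\le 1/\epsilon\}$, where $(1+|\zeta|^2)^k\le(1+\epsilon^{-2})^k\lesssim\epsilon^{-2k}$; factoring $(1+|\zeta|^2)^{m+k}=(1+|\zeta|^2)^m(1+|\zeta|^2)^k$ pulls out $\epsilon^{-2k}$ and again leaves $\norm{f}_{H^m}^2$. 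Taking square roots gives both bounds.

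There is no genuine obstacle here: the statement is flagged as ``obvious'' and each part collapses to a one-line Plancherel computation. The only points deserving a word of care are the appeal to dominated convergence in (i), rather than a naive pointwise argument, and the uniform-in-$\epsilon$ comparison $(1+\epsilon^{-2})^k\lesssim\epsilon^{-2k}$ used in the second estimate of (iii), which is clean for $0<\epsilon\le 1$ — the only regime relevant when one later lets $\epsilon\to 0$.
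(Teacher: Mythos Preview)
Your proposal is correct. The paper itself gives no proof of this lemma---it simply states that ``the following properties of $\mathcal{J}_{\epsilon}$ are obvious''---and your Plancherel-based verification is precisely the standard argument one has in mind when calling such properties obvious, so there is nothing to compare.
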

Then we regularize the modified quasi-geostrophic equation
\eqref{MQG} as follows
\begin{equation}\label{ReMQG}
 \begin{cases}
    &\theta^{\epsilon}_{t}+\mathcal{J}_{\epsilon}
   \big((\mathcal{J}_\epsilon u^{\epsilon})\cdot\nabla(\mathcal{J}_{\epsilon}\theta^{\epsilon})\big)
   +\nu \mathcal{J}_{\epsilon}|D|^{\alpha}\theta^{\epsilon} =0
   \\
    &u^{\epsilon}=|D|^{\alpha-1}\mathcal{R}^{\perp}\theta^{\epsilon}, \quad
    \theta^{\epsilon}|_{t=0}=\mathcal{J}_\epsilon\theta_{0}.
 \end{cases}
\end{equation}
For this approximate system, we have
\begin{proposition}\label{prop RegGlobal}
Let the initial data $\theta_{0}\in L^2$. Then for any
$\epsilon>0$ there exists a unique global solution
$\theta^{\epsilon}\in \mathcal{C}^{1}([0,\infty[, H^{\infty})$ to the regularized
equation \eqref{ReMQG}.
\end{proposition}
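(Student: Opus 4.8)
The plan is to read the regularized system \eqref{ReMQG} as an ordinary differential equation $\tfrac{d}{dt}\theta^{\epsilon}=F_{\epsilon}(\theta^{\epsilon})$ with $\theta^{\epsilon}|_{t=0}=\mathcal{J}_{\epsilon}\theta_{0}$, posed on the Hilbert space $L^{2}(\mathbb{R}^{2})$, where $F_{\epsilon}(\theta):=-\mathcal{J}_{\epsilon}\big((\mathcal{J}_{\epsilon}u)\cdot\nabla(\mathcal{J}_{\epsilon}\theta)\big)-\nu\,\mathcal{J}_{\epsilon}|D|^{\alpha}\theta$ and $u=|D|^{\alpha-1}\mathcal{R}^{\bot}\theta$, and then to invoke the Cauchy--Lipschitz (Picard--Lindel\"of) theorem for ODEs valued in a Banach space. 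The two things to verify are that $F_{\epsilon}$ maps $L^{2}$ into itself and is Lipschitz on bounded sets, which yields a unique maximal solution, and that an a priori $L^{2}$ bound forces the maximal time to be infinite.

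For the Lipschitz property I would estimate each factor by exploiting the frequency truncation. Since $\mathcal{J}_{\epsilon}$ restricts every Fourier support to the ball $B_{1/\epsilon}$, Lemma \ref{lem mollifier} together with Bernstein's inequality gives $\norm{\mathcal{J}_{\epsilon}|D|^{\alpha}\theta}_{L^{2}}\le\epsilon^{-\alpha}\norm{\theta}_{L^{2}}$ and $\norm{\nabla\mathcal{J}_{\epsilon}\theta}_{L^{2}}\le\epsilon^{-1}\norm{\theta}_{L^{2}}$, both linear in $\theta$. The delicate factor is the velocity $\mathcal{J}_{\epsilon}u$, whose Fourier multiplier has modulus $|\zeta|^{\alpha-1}$ on $B_{1/\epsilon}$; this is \emph{not} bounded near the origin when $\alpha\in]0,1[$, so $\mathcal{J}_{\epsilon}|D|^{\alpha-1}\mathcal{R}^{\bot}$ fails to be bounded on $L^{2}$. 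The remedy is that $\widehat{\mathcal{J}_{\epsilon}u}$ still lies in $L^{1}$: by Cauchy--Schwarz and the fact that $\int_{B_{1/\epsilon}}|\zeta|^{2(\alpha-1)}\mathrm{d}\zeta<\infty$ in $\mathbb{R}^{2}$ precisely because $\alpha>0$, one obtains $\norm{\mathcal{J}_{\epsilon}u}_{L^{\infty}}\le C(\alpha,\epsilon)\norm{\theta}_{L^{2}}$. Hence $\norm{\mathcal{J}_{\epsilon}((\mathcal{J}_{\epsilon}u)\cdot\nabla\mathcal{J}_{\epsilon}\theta)}_{L^{2}}\le\norm{\mathcal{J}_{\epsilon}u}_{L^{\infty}}\norm{\nabla\mathcal{J}_{\epsilon}\theta}_{L^{2}}\lesssim_{\alpha,\epsilon}\norm{\theta}_{L^{2}}^{2}$, and since this term is bilinear in $\theta$, the difference $F_{\epsilon}(\theta_{1})-F_{\epsilon}(\theta_{2})$ is controlled by $C(\alpha,\epsilon)\big(1+\norm{\theta_{1}}_{L^{2}}+\norm{\theta_{2}}_{L^{2}}\big)\norm{\theta_{1}-\theta_{2}}_{L^{2}}$, i.e.\ $F_{\epsilon}$ is Lipschitz on bounded sets. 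Cauchy--Lipschitz then produces a unique $\theta^{\epsilon}\in\mathcal{C}^{1}([0,T^{*}_{\epsilon}),L^{2})$.

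Next I would show the solution stays in the subspace $L^{2}_{\epsilon}$ of functions with Fourier support in $B_{1/\epsilon}$. Since $\mathcal{J}_{\epsilon}^{2}=\mathcal{J}_{\epsilon}$ and $\mathcal{J}_{\epsilon}$ commutes with $|D|^{\alpha}$ and with $|D|^{\alpha-1}\mathcal{R}^{\bot}$, the function $\mathcal{J}_{\epsilon}\theta^{\epsilon}$ solves the same ODE with the same datum $\mathcal{J}_{\epsilon}\theta_{0}$, so by uniqueness $\theta^{\epsilon}=\mathcal{J}_{\epsilon}\theta^{\epsilon}\in L^{2}_{\epsilon}$ for every $t$; on $L^{2}_{\epsilon}$ all Sobolev norms satisfy $\norm{f}_{H^{s}}\le C(s,\epsilon)\norm{f}_{L^{2}}$ by Bernstein, whence $\theta^{\epsilon}$ is valued in $H^{\infty}$. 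Globality then follows from an $L^{2}$ energy estimate: pairing the equation with $\theta^{\epsilon}$ and using $\theta^{\epsilon}=\mathcal{J}_{\epsilon}\theta^{\epsilon}$ and the self-adjointness of $\mathcal{J}_{\epsilon}$, the transport contribution becomes $\int(\mathcal{J}_{\epsilon}u\cdot\nabla\theta^{\epsilon})\theta^{\epsilon}\,\mathrm{d}x$. As $\mathcal{J}_{\epsilon}u$ is divergence-free (being $|D|^{\alpha-1}$, then $\mathcal{J}_\epsilon$, applied to the divergence-free field $\mathcal{R}^{\bot}\theta^{\epsilon}$) and lies in $L^{\infty}$, while $|\theta^{\epsilon}|^{2}\in W^{1,1}$, an integration by parts annihilates this term; the dissipative part contributes $\nu\norm{|D|^{\alpha/2}\theta^{\epsilon}}_{L^{2}}^{2}\ge0$, giving $\norm{\theta^{\epsilon}(t)}_{L^{2}}\le\norm{\theta_{0}}_{L^{2}}$ for all $t$. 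Since on $L^{2}_{\epsilon}$ every norm of $\theta^{\epsilon}$ is dominated by this fixed bound, the continuation criterion excludes finite-time blow-up, so $T^{*}_{\epsilon}=\infty$ and $\theta^{\epsilon}\in\mathcal{C}^{1}([0,\infty),H^{\infty})$.

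The step I expect to be the main obstacle is the well-definedness and Lipschitz control of the velocity term when $\alpha\in]0,1[$: because $\mathcal{J}_{\epsilon}$ removes only high frequencies, the low-frequency singularity $|\zeta|^{\alpha-1}$ survives and $\mathcal{J}_{\epsilon}u$ need not belong to $L^{2}$. Everything hinges on replacing the missing $L^{2}$ operator bound by the $L^{1}$-Fourier/$L^{\infty}$ estimate above, which is exactly what keeps both the Lipschitz inequality and the energy cancellation meaningful; the remaining verifications are routine once this point is settled.
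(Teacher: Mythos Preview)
Your proposal is correct and follows precisely the approach the paper indicates: the paper merely states that one ``can use the standard Cauchy--Lipschitz argument combined with $L^{2}$ energy estimate'' and records in the subsequent remark that $\theta^{\epsilon}=\mathcal{J}_{\epsilon}\theta^{\epsilon}$, without supplying any further details. You have filled in exactly those details, including the one nontrivial point the paper suppresses---namely that for $\alpha\in]0,1[$ the multiplier $|\zeta|^{\alpha-1}$ is unbounded near the origin, so the control of $\mathcal{J}_{\epsilon}u^{\epsilon}$ must go through the $L^{\infty}$ bound $\norm{\widehat{\mathcal{J}_{\epsilon}u^{\epsilon}}}_{L^{1}}\lesssim_{\alpha,\epsilon}\norm{\theta^{\epsilon}}_{L^{2}}$ rather than an $L^{2}$ operator bound.
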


\begin{proof}
  We can write \eqref{ReMQG} as follows
\begin{equation}
  \frac{d}{d t} \theta^\epsilon = F_\epsilon(\theta^\epsilon),  \quad  \theta^{\epsilon}|_{t=0}=\mathcal{J}_\epsilon\theta_{0},
\end{equation}
with
\begin{equation*}
  F_\epsilon(\theta^\epsilon) = - \mathcal{J}_\epsilon\big((\mathcal{J}_\epsilon u^{\epsilon})\cdot\nabla(\mathcal{J}_{\epsilon}\theta^{\epsilon})\big)
   -\nu \mathcal{J}_{\epsilon}|D|^{\alpha}\theta^{\epsilon}.
\end{equation*}
For every $\epsilon>0$, we can show that
\begin{equation*}
  \|F_\epsilon(f)\|_{L^2}\lesssim_{\epsilon,\nu} \|f\|_{L^2} +\|f\|^2_{L^2},
\end{equation*}
and
\begin{equation*}
  \|F_\epsilon(f_1,f_2)\|_{L^2}\lesssim_{\epsilon,\nu,\|f_i\|_{L^2}}\|f_1-f_2\|_{L^2},
\end{equation*}
where $f$, $f_1$, $f_2$ are all in $L^2$. This means that $F_\epsilon$ maps $L^2$ into $L^2$ and $F_\epsilon$ is locally Lipschitz continuous on $L^2$.
Hence the Cauchy-Lipschitz theorem ensures that for every $\theta_0\in L^2$, there exists a unique solution
$\theta^\epsilon\in\mathcal{C}^1([0,T_\epsilon[,L^2)$ with
$T_\epsilon>0$ is the maximus existence time.

Moreover, using the $L^2$ energy method, form $\mathrm{div}u^\epsilon=0$ and $\mathcal{J}_\epsilon\theta^\epsilon\in \mathcal{C}^1([0,T_\epsilon[, H^\infty)$,
we get
\begin{equation*}
  \frac{1}{2}\frac{d}{d t}\|\theta^\epsilon\|_{L^2}^2 + \||D|^{\alpha/2}\mathcal{J}_\epsilon \theta^\epsilon\|_{L^2}^2 = 0
\end{equation*}
Thus
\begin{equation*}
  \sup_{t\in [0,T_\epsilon[}\|\theta^\epsilon(t)\|_{L^2}\leq \|\mathcal{J}_\epsilon\theta_0\|_{L^2}\leq \|\theta_0\|_{L^2}.
\end{equation*}
Then the classical continuation criterion guarantees $T_\epsilon=\infty$.

Moreover, since $\mathcal{J}_\epsilon\theta^\epsilon$ is also a solution of \eqref{ReMQG}, form the uniqueness of $\theta^\epsilon$ we find
$\theta^\epsilon=\mathcal{J}_\epsilon \theta^\epsilon$.
\end{proof}

\begin{remark}
 From the proof we know $\theta^\epsilon=\mathcal{J}_\epsilon \theta^\epsilon$, thus \eqref{ReMQG} will be written as follows
 \begin{equation}\label{ReMQG2}
 \begin{cases}
  \begin{split}
    &\theta^{\epsilon}_{t}+\mathcal{J}_{\epsilon}
   (  u^{\epsilon}\cdot\nabla\theta^{\epsilon})
   +\nu |D|^{\alpha}\theta^{\epsilon} =0
   \\
    &u^{\epsilon}=|D|^{\alpha-1}\mathcal{R}^{\perp}\theta^{\epsilon}, \quad
    \theta^{\epsilon}|_{t=0}=\mathcal{J}_\epsilon\theta_{0}.
  \end{split}
 \end{cases}
 \end{equation}
 In the sequel we shall instead consider this form.
\end{remark}

Next, we prove the main result in this section.

\begin{proof}[Proof of Proposition \ref{prop local}]

\textit{Step 1: Uniform Bounds.}

We claim that: the regularized solution $\theta^{\epsilon}\in
\mathcal{C}^{1}([0,\infty[,H^{\infty})$ to equation \eqref{ReMQG} satisfies
\begin{equation}\label{eq HmEst1}
  \frac{d}{2dt}\norm{\theta^{\epsilon}}_{B^{m}_{2,2}}^{2}+\frac{
  \nu}{2}\norm{|D|^{\frac{\alpha}{2}}\theta^{\epsilon}}_{B^{m}_{2,2}}^{2}\lesssim_{\nu,\alpha}
  \frac{1}{\nu} \norm{ \nabla \theta^\epsilon}_{L^\infty}^{\alpha} \norm{
 \theta^\epsilon}_{L^\infty}^{2-\alpha}\norm{\theta^{\epsilon}}_{B^{m}_{2,2}}^2
 + \norm{ \theta^{\epsilon}}_{L^{2}}^2\norm{\theta^{\epsilon}}_{B^{m}_{2,2}}.
\end{equation}
Indeed, for every $q\in \mathbb{N}$, applying dyadic operator $\Delta_{q}$ to both sides of
regularized equation \eqref{ReMQG2} yields
\begin{equation*}
 \partial_{t}\Delta_{q}\theta^{\epsilon}+\mathcal{J}_{\epsilon}\big(( S_{q+1}u^{\epsilon})
 \cdot\nabla
  \Delta_{q}\theta^{\epsilon} \big)+ \nu   |D|^{\alpha}\Delta_{q}\theta^{\epsilon} =
 \mathcal{J}_{\epsilon}\big( F_{q}( u^{\epsilon},
  \theta^{\epsilon})\big),
\end{equation*}
where
\begin{equation*}
 F_{q}( u^{\epsilon}, \theta^{\epsilon})=(S_{q+1} u^{\epsilon})\cdot\nabla
 \Delta_{q} \theta^{\epsilon}- \Delta_{q}( u^{\epsilon}\cdot\nabla \theta^{\epsilon}).
\end{equation*}
Taking the $L^{2}$ inner product in the above equality with
$\Delta_{q}\theta^{\epsilon}$ and using the divergence free
property, we have
\begin{equation*}
\begin{split}
 \frac{1}{2}\frac{d}{dt}\norm{\Delta_{q}
 \theta^{\epsilon}}_{L^{2}}^{2}+\nu
 \norm{|D|^{\frac{\alpha}{2}}\Delta_{q}\theta^{\epsilon}}_{L^{2}}^{2} & \leq
 \Big|\int_{\mathbb{R}^2} \big(F_q(  u^\epsilon, \theta^\epsilon)\big)(x)\mathcal{J}_\epsilon
 \Delta_q\theta^\epsilon(x)\mathrm{d}x \Big| \\ & \leq
 2^{-q\frac{\alpha}{2}}\norm{F_{q}( u^{\epsilon},
  \theta^{\epsilon})}_{L^2} 2^{q\frac{\alpha}{2}}\norm{\mathcal{J}_{\epsilon}\Delta_{q}\theta^{\epsilon}}_{L^{2}} \\
 & \lesssim 2^{-q\frac{\alpha}{2}}\norm{F_{q}( u^{\epsilon},
  \theta^{\epsilon})}_{L^2}\norm{|D|^{\frac{\alpha}{2}} \Delta_{q}\theta^{\epsilon}}_{L^{2}}.
\end{split}
\end{equation*}
Then by virtue of Young inequality, we deduce
\begin{equation}\label{eq HmEs2}
  \frac{1}{2}\frac{d}{dt}\norm{\Delta_{q}
 \theta^{\epsilon}}_{L^{2}}^{2}+\frac{\nu}{2}
 \norm{|D|^{\frac{\alpha}{2}} \Delta_{q}\theta^{\epsilon}}_{L^{2}}^{2}\leq \frac{C_0}{\nu}
 \Big(2^{-q\frac{\alpha}{2}}\norm{F_{q}( u^{\epsilon},
  \theta^{\epsilon})}_{L^2}\Big)^2.
\end{equation}
From the inequality \eqref{eq commutator2} in the appendix, we know that
\begin{equation}\label{eq FqEst}
\begin{split}
 & 2^{-q\frac{\alpha}{2}}\norm{F_{q}( u^{\epsilon}, \theta^{\epsilon})}_{L^{2}}
 \\ \lesssim & \norm{|D|^{1-\frac{\alpha}{2}}
  u^{\epsilon}}_{L^{\infty}}\sum_{q'\geq
 q-4}2^{(q-q')(1-\frac{\alpha}{2})}\norm{\Delta_{q'} \theta^{\epsilon}}_{L^{2}}+
 \norm{|D|^{\frac{\alpha}{2}} \theta^{\epsilon}}_{L^{\infty}}\sum_{|q'-q|\leq
 4}\norm{\Delta_{q'} \theta^{\epsilon}}_{L^{2}}
\end{split}
\end{equation}
Also notice that for some number $K\in\mathbb{N}$
\begin{equation*}
 \begin{split}
   \norm{|D|^{1-\frac{\alpha}{2}} u^{\epsilon}}_{L^{\infty}}+ \norm{|D|^{\frac{\alpha}{2}} \theta^{\epsilon}}_{L^{\infty}} &\lesssim
   \norm{|D|^{1-\frac{\alpha}{2}} |D|^{\alpha-1}\mathcal{R}^{\bot}\theta^{\epsilon}}_{\dot{B}^{0}_{\infty,1}}+
   \norm{|D|^{\frac{\alpha}{2}} \theta^{\epsilon}}_{\dot{B}^{0}_{\infty,1}}
   \\  & \lesssim
   \sum_{k=-\infty}^{K -1}
   2^{k\alpha/2}\norm{\dot\Delta_k \theta^{\epsilon}}_{L^{\infty}}
   + \sum_{k=
   K }^{\infty}2^{-k(1-\frac{\alpha}{2})}\norm{\dot\Delta_k \nabla\theta^{\epsilon}}_{L^{\infty}}
   \\
   &\lesssim 2^{K\alpha/2} \norm{ \theta^{\epsilon}}_{L^{\infty}}+ 2^{K(\frac{\alpha}{2}-1)}
   \norm{ \nabla\theta^{\epsilon}}_{L^{\infty}},
 \end{split}
\end{equation*}
thus choosing $K$ satisfying $\norm{\theta^\epsilon}_{L^\infty}2^{K}\thickapprox \norm{ \nabla \theta^\epsilon}_{L^\infty}$, we deduce
\begin{equation}\label{eq simplInq}
 \norm{|D|^{1-\frac{\alpha}{2}} u^{\epsilon}}_{L^{\infty}}+ \norm{|D|^{\frac{\alpha}{2}} \theta^{\epsilon}}_{L^{\infty}}
 \lesssim \norm{  \nabla \theta^\epsilon}_{L^\infty}^{\frac{\alpha}{2}}\norm{ \theta^\epsilon}_{L^\infty}^{1-\frac{\alpha}{2}}.
\end{equation}
Plunging the above two estimates \eqref{eq simplInq} and \eqref{eq FqEst} into inequality \eqref{eq HmEs2},
then multiplying both sides by $2^{2qm}$ and summing up over $q\in\mathbb{N}$, we obtain
\begin{equation}\label{eq hmEs-HF}
 \frac{1}{2}\frac{d}{dt}\sum_{q\in\mathbb{N}}2^{2qm}\norm{\Delta_{q}
 \theta^{\epsilon}}_{L^{2}}^{2}+\frac{\nu}{2}\sum_{q\in\mathbb{N}}2^{2qm}
 \norm{|D|^{\frac{\alpha}{2}} \Delta_{q}\theta^{\epsilon}}_{L^{2}}^{2}
 \lesssim \frac{1}{\nu} \norm{  \nabla \theta^\epsilon}_{L^\infty}^{\alpha} \norm{
 \theta^\epsilon}_{L^\infty}^{2-\alpha} \norm{  \theta^\epsilon}_{B^m_{2,2}}^2.
\end{equation}
On the other hand, we apply the low frequency operator $\Delta_{-1}$ to the regularized system \eqref{ReMQG} to get
\begin{equation*}
 \partial_t\Delta_{-1}\theta^\epsilon + \nu  |D|^{\alpha}\Delta_{-1}\theta^{\epsilon}=-\mathcal{J}_\epsilon
 \Delta_{-1}\big(  u^\epsilon \cdot\nabla  \theta^\epsilon \big).
\end{equation*}
Multiplying both sides by $\Delta_{-1}\theta^\epsilon$ and integrating in the spatial variable, we obtain
\begin{equation*}
\begin{split}
 \frac{1}{2}\frac{d}{dt}\norm{\Delta_{-1}\theta^\epsilon}_{L^2}^2+\nu \norm{|D|^{\frac{\alpha}{2}}\Delta_{-1} \theta^\epsilon}_{L^2}^2
 & \leq \Big|\int_{\mathbb{R}^2} \mathrm{div}\Delta_{-1}\big(   u^\epsilon \, \theta^\epsilon\big)(x) \,
 \Delta_{-1}\mathcal{J}_\epsilon \theta^\epsilon(x)\mathrm{d}x\Big| \\
 & \lesssim \norm{  u^\epsilon}_{L^\infty}\norm{  \theta^\epsilon}_{L^2}^2.
\end{split}
\end{equation*}
We see that
\begin{equation}\label{eq fact1}
\begin{split}
 \norm{  u^\epsilon}_{L^\infty} & \leq \Big(\sum_{j\leq -1}+ \sum_{j\geq 0}\Big) \norm{\dot \Delta_j
 |D|^{\alpha-1}\mathcal{R}^{\bot} \theta^\epsilon}_{L^\infty} \\
 & \lesssim\sum_{j\leq -1}2^{j\alpha}\norm{\dot\Delta_j   \theta^\epsilon}_{L^2}+ \sum_{j\geq 0} 2^{j(\alpha-2)} \norm{\dot \Delta_j
 \nabla \theta^\epsilon}_{L^\infty} \\
 & \lesssim \norm{ \theta^\epsilon}_{L^2}+  \norm{\nabla  \theta^\epsilon}_{L^\infty},
\end{split}
\end{equation}
thus we have
\begin{equation}\label{eq hmEs-LF}
 \frac{1}{2}\frac{d}{dt}\norm{\Delta_{-1}\theta^\epsilon}_{L^2}^2+ \frac{\nu}{2} \norm{|D|^{\frac{\alpha}{2}}\Delta_{-1} \theta^\epsilon}_{L^2}^2
 \lesssim \norm{ \theta^\epsilon}_{B^m_{2,2}}\norm{  \theta^\epsilon}_{L^2}^2 .
\end{equation}
Multiplying \eqref{eq hmEs-LF} by $2^{-2m}$ and combining it with \eqref{eq hmEs-HF} leads to \eqref{eq HmEst1}.

Next, we prove that the solution family $(\theta^{\epsilon})$ is
uniformly bounded in $H^{m}$. Indeed, from estimate
\eqref{eq HmEst1}, Besov embedding and the fact that
$\norm{\cdot}^2_{B^m_{2,2}}/C_0\leq \norm{\cdot}^2_{H^m}\leq C_0 \norm{\cdot}^2_{B^m_{2,2}}$ with $C_0$ a universal number, we have
\begin{equation}\label{eq Hm-fact}
\begin{split}
  \frac{d}{dt}\Bigl(\norm{\theta^{\epsilon}(t)}^2_{H^{m}} +\int_0^t \|\theta^\epsilon(\tau)\|^2_{H^{m+\frac{\alpha}{2}}}\mathrm{d}\tau\Bigr)
  & \leq C\Big( \norm{  \nabla \theta^\epsilon}_{L^\infty}^{\alpha}
  \norm{  \theta^\epsilon}_{L^\infty}^{2-\alpha}\norm{\theta^{\epsilon}}_{H^{m}}
   + \norm{ \theta^{\epsilon}}_{L^{2}}\Big) \norm{\theta^{\epsilon}}_{H^{m}}^2
   \\ & \leq   C_1 (1+\norm{\theta^{\epsilon}(t)}_{H^{m}}^2)\norm{\theta^{\epsilon}(t)}_{H^{m}}^2,
\end{split}
\end{equation}
where $C_1$ depends only on $m,\alpha,\nu$.
Gronwall inequality yields that
\begin{equation}\label{HmUnifBounds}
    \sup_{0\leq t\leq T} \norm{\theta^{\epsilon}}_{H^{m}}^2 +\|\theta^\epsilon\|_{L^2_T H^{m+\frac{\alpha}{2}}}^2  \leq
   \frac{\norm{\theta_0}_{H^m}^2}{(\|\theta_0\|_{H^m}^2+1)e^{-CT}-\|\theta_0\|_{H^m}^2}.
\end{equation}
Thus for some
\begin{equation*}
T<\frac{1}{C}\log (1+ 1/\|\theta_0\|_{H^m}^2),
\end{equation*}
the family $(\theta^{\epsilon})$ is uniformly bounded in $\mathcal{C}([0,T],H^{m})\cap L^2([0,T]; H^{m+\frac{\alpha}{2}})$, $m>2$.

\textit{Step 2: Strong Convergence}

We firstly claim that the solutions $(\theta^{\epsilon})$ to the
approximate equation \eqref{ReMQG2} converge in
$\mathcal{C}([0,T],L^{2}(\mathbb{R}^{2}))$. 
Indeed for all $0<\tilde{\epsilon}<\epsilon$, we assume that $\theta^\epsilon$ and $\theta^{\tilde{\epsilon}}$ are two approximate solutions,
then from a direct calculation
\begin{equation*}
 \begin{split}
  \big( \theta^{\epsilon}_{t}-\theta^{ \tilde{\epsilon}}_{t},
  \theta^{\epsilon}-\theta^{\tilde{\epsilon}}\big) =
  -\nu \big(  |D|^{\alpha}\theta^{\epsilon}
  - |D|^{\alpha}\theta^{\tilde{\epsilon}}, \theta^{\epsilon}-\theta^{\tilde{\epsilon}}
  \big)  -\Big( \big( \mathcal{J}_{\epsilon} (  u^{\epsilon} \cdot\nabla
   \theta^{\epsilon}   )-\mathcal{J}_{\tilde{\epsilon}} ( u^{ \tilde{\epsilon}} \cdot\nabla
   \theta^{ \tilde{\epsilon}} )\big),\theta^{\epsilon}-\theta^{ \tilde{\epsilon}}
  \Big),
 \end{split}
\end{equation*}
we have
\begin{equation*}
\begin{split}
 & \frac{1}{2}\frac{d}{dt}\norm{\theta^\epsilon (t)-\theta^{\tilde{\epsilon}}(t)}_{L^2}^2+ \nu
 \norm{ |D|^{\frac{\alpha}{2}}(\theta^\epsilon-\theta^{ \tilde{\epsilon}})}_{L^{2}}^2 \\
  =& \Big( (\mathcal{J}_{\epsilon}-\mathcal{J}_{\tilde{\epsilon}})
    \big(  u^{\epsilon} \cdot\nabla \theta^{\epsilon}  \big),
    \theta^{\epsilon}-\theta^{\tilde{\epsilon}} \Big)
    + \Big( \mathcal{J}_{\tilde{\epsilon}}
    \big(  (u^{\epsilon}-u^{\tilde{\epsilon}})
    \cdot\nabla \theta^{\epsilon}  \big),
    \theta^{\epsilon}-\theta^{\tilde{\epsilon}} \Big)  \\&
     + \Big( \mathcal{J}_{\tilde{\epsilon}}
    \big(  u^{\tilde{\epsilon}} \cdot\nabla
    (\theta^{\epsilon}-\theta^{\tilde{\epsilon}}) \big), \theta^{\epsilon}-\theta^{\tilde{\epsilon}}
    \Big) \\
    := & II_{1}+ II_{2}+ II_{3}.
  \end{split}
\end{equation*}
We set $\delta_0:=\min\{m-\alpha,1 \}$, then for $II_1$, by means of the calculus inequality \eqref{CalInqSob1}, divergence
free condition and the following simple inequality
\begin{equation*}
  \norm{u^{\epsilon}}_{H^{m-\alpha+1}} =\norm{|D|^{\alpha-1}R^{\bot}\theta^{\epsilon}}_{H^{m-\alpha+1}} \lesssim
  \norm{\theta^{\epsilon}}_{H^{m}}\lesssim M,
\end{equation*}
we have
\begin{equation*}
 \begin{split}
   |II_{1}| & \lesssim \epsilon^{\delta_0} \norm{  u^{\epsilon}
    \theta^{\epsilon} }_{H^{1+\delta_0}}
   \norm{\theta^{\epsilon}-\theta^{\tilde{\epsilon}}}_{L^{2}} \\
  & \lesssim  \epsilon^{\delta_0} \big(\norm{u^{\epsilon}}_{H^{1+\delta_0}}
   +  \norm{\theta^{\epsilon}}_{H^{1+\delta_0}}\big)
    \norm{\theta^{\epsilon}-\theta^{\tilde{\epsilon}}}_{L^{2}} \\
   & \lesssim_{M}  \epsilon^{\delta_0}
   \norm{\theta^{\epsilon}-\theta^{\tilde{\epsilon}}}_{L^{2}}.
 \end{split}
\end{equation*}
For $II_2$, we directly obtain
\begin{equation*}
 \begin{split}
  |II_{2}| &  \leq  \norm{ (u^{\epsilon}-u^{\tilde{\epsilon}})\cdot\nabla
   \theta^{\epsilon} }_{\dot H^{-\frac{\alpha}{2}}}\norm{ |D|^{\frac{\alpha}{2}}
  (\theta^{\epsilon}-\theta^{\tilde{\epsilon}})}_{L^{2}} \\ & \leq  C_\alpha \norm
  {|D|^{\alpha-1}\mathcal{R}^{\bot}(\theta^{\epsilon}-\theta^{\tilde{\epsilon}})}_{\dot H^{1-\alpha}}^2
  \norm{\nabla\theta^{\epsilon}}_{\dot H^{\frac{\alpha}{2}}}^2
  + \frac{\nu}{2}\norm{ |D|^{\frac{\alpha}{2}}(\theta^{\epsilon}-\theta^{\tilde{\epsilon}})}_{L^{2}}^2 \\
   &\leq C_{M,\alpha}
  \norm{\theta^{\epsilon}-\theta^{\tilde{\epsilon}}}_{L^{2}}^{2}
  + \frac{\nu}{2}\norm{ |D|^{\frac{\alpha}{2}}(\theta^{\epsilon}-\theta^{\tilde{\epsilon}})}_{L^{2}}^2,
 \end{split}
\end{equation*}
where in the second line we have used the classical product estimate (cf. \cite{ref Dong2}) that for every $s,t<1$ and $s+t>0$,
\begin{equation*}
 \norm{fg}_{\dot H^{s+t-1}}\lesssim_{s,t} \norm{f}_{\dot H^s}\norm{g}_{\dot H^t}.
\end{equation*}
For the last term, $II_{3}$, from the divergence free fact of
$u^{\tilde{\epsilon}}$ and $\mathcal{J}_{\tilde{\epsilon}}\theta^\epsilon =\theta^\epsilon$ we get
\begin{equation*}
  \begin{split}
   II_{3} =   \Big(
    \big(  u^{\tilde{\epsilon}} \cdot\nabla
    (\theta^{\epsilon}-\theta^{\tilde{\epsilon}}) \big),\mathcal{J}_{\tilde{\epsilon}}(
    \theta^{\epsilon}-\theta^{\tilde{\epsilon}})
    \Big)     =   \frac{1}{2} \Big(
      u^{\tilde{\epsilon}},\nabla
    (\theta^{\epsilon}-\theta^{\tilde{\epsilon}})^{2}
    \Big) = 0
  \end{split}
\end{equation*}
Putting all these estimates together yields that for $\delta_0=\min\{m-\alpha,1\}$
\begin{equation*}
  \frac{1}{2}\frac{d}{dt}\norm{\theta^{\epsilon}-\theta^{\tilde{\epsilon}}}_{L^{2}}^{2}\lesssim_{M}
  \big( \epsilon^{\delta_0} + \norm{\theta^{\epsilon}-\theta^{\tilde{\epsilon}}}_{L^{2}}
    \big)
    \norm{\theta^{\epsilon}-\theta^{\tilde{\epsilon}}}_{L^{2}}.
\end{equation*}
Furthermore
\begin{equation*}
  \frac{d}{dt}\norm{\theta^{\epsilon}-\theta^{\tilde{\epsilon}}}_{L^{2}}\leq
  C(M) \big( \epsilon^{\delta_0} + \norm{\theta^{\epsilon}-\theta^{\tilde{\epsilon}}}_{L^{2}}
    \big).
\end{equation*}
Thus the Gr\"onwall inequality leads to the desired result:
\begin{equation}\label{L2contra}
 \begin{split}
   \sup_{0\leq t\leq T}
   \norm{\theta^{\epsilon}-\theta^{\tilde{\epsilon}}}_{L^{2}}& \leq
   \; e^{C(M)T}\big(\epsilon^{\delta_0}
   +\norm{\theta^{\epsilon}_{0}-\theta^{\tilde{\epsilon}}_{0}}_{L^{2}}\big) \\
   & \lesssim_{T,\norm{\theta_0}_{H^m}} a(\epsilon),
 \end{split}
\end{equation}
where $a(\epsilon):= \epsilon^{\delta_0} +\norm{(Id-\mathcal{J}_\epsilon) \theta_0}_{L^2}$ satisfies that $a(\epsilon)\rightarrow 0$ as $\epsilon\rightarrow 0$.

From \eqref{L2contra}, we deduce that the solution family
$(\theta^{\epsilon})$ is Cauchy sequence in
$\mathcal{C}([0,T],L^{2}(\mathbb{R}^{2}))$, so that it converges strongly to a
function $\theta\in \mathcal{C}([0,T],L^{2}(\mathbb{R}^{2}))$. This result
combined with uniform bounds \eqref{HmUnifBounds} and the
interpolation inequality in Sobolev spaces gives that for all $0\leq
s <m$
\begin{equation*}
 \begin{split}
  \sup_{0\leq t\leq T}\norm{\theta^{\epsilon}-\theta}_{H^{s}}& \leq
  C_{s}\sup_{0\leq t\leq
  T}(\norm{\theta^{\epsilon}-\theta}_{L^{2}}^{1-s/m}\norm{\theta^{\epsilon}-\theta}_{H^{m}}^{s/m})\\
   & \lesssim_{s,T,\norm{\theta_{0}}_{H^{m}}} a(\epsilon) ^{1-s/m}.
 \end{split}
\end{equation*}
Hence we obtain the strong convergence in
$\mathcal{C}([0,T],H^{s}(\mathbb{R}^{2}))$ for all $ s<m$. With $2<s<m$, this
specially implies strong convergence in
$\mathcal{C}([0,T],\mathcal{C}^{1}(\mathbb{R}^{2}))$. Also from the equation
\begin{equation*}
   \theta^{\epsilon}_{t}=-\nu |D|^{\alpha}\theta^{\epsilon}
  -\mathcal{J}_{\epsilon}( u^{\epsilon} \cdot\nabla \theta^{\epsilon}),
\end{equation*}
we find that $\theta^{\epsilon}_{t} $ strongly converges to $-\nu
|D|^{\alpha}\theta - u\cdot\nabla\theta$ in
$\mathcal{C}([0,T],L^2(\mathbb{R}^{2}) )$. Since $\theta^{\epsilon}\rightarrow
\theta$, the distribution limit of $\theta^{\epsilon}_{t}$ has to be
$\theta_{t}$. Thus $\theta \in \mathcal{C}^{1}([0,T],L^2(\mathbb{R}^{2}))\cap
\mathcal{C}([0,T],\mathcal{C}^{1}(\mathbb{R}^{2}))$ is a solution to the
original equation \eqref{MQG}. Using Fatou's Lemma, from \eqref{HmUnifBounds}, we also have $\theta\in
L^{\infty}([0,T],H^{m}(\mathbb{R}^{2}))\cap L^2([0,T],H^{m+\frac{\alpha}{2}}(\mathbb{R}^2))$.

Next, we show that $\theta\in \mathcal{C}([0,T],H^{m}(\mathbb{R}^2))$ indeed. The proof is classical (cf. \cite{KisNS}).
We first prove that $\theta(t)\rightarrow \theta_0$ weakly in $H^m$ as $t\rightarrow 0$. Let $\psi(x)\in\mathcal{C}^\infty_0 (\mathbb{R}^2)$,
denote
$$F^\epsilon(t,\psi):= (\theta^\epsilon, \psi)=\int_{\mathbb{R}^2}\theta^\epsilon(t,x)\psi(x)\mathrm{d}x.$$
Clearly $F^\epsilon(\cdot,\psi)\in \mathcal{C}([0,T])$. And by taking the inner product of \eqref{ReMQG2} with $\psi$, we get
\begin{equation*}
  \frac{d}{dt}F^\epsilon(t,\psi) = -(u^\epsilon\theta^\epsilon, \mathcal{J}_\epsilon \nabla\psi ) - \nu(\theta^\epsilon, |D|^\alpha \psi),
\end{equation*}
thus for every $p\in ]1,2]$
\begin{equation*}
  \int_0^T |F^\epsilon_t|^p\mathrm{d} t \leq T^{\frac{1}{p}-\frac{1}{2}} \| u^\epsilon\|_{L^2_T L^2}\|\theta^\epsilon \|_{L^{\infty}_T L^2} \|\psi\|_{H^3}+\nu \|\theta^\epsilon\|_{L^p_T L^2}\|\psi\|_{H^\alpha}.
\end{equation*}
From the $L^2$ energy estimate $\|\theta^\epsilon\|_{L^\infty_T L^2}^2+\|\theta^\epsilon\|_{L^2_T \dot H^{\alpha/2}}^2 \leq \|\theta_0\|_{L^2}^2 $, we know $\|F^\epsilon_t(\cdot,\psi)\|_{L^p([0,T])}\lesssim_{T,\|\psi\|_{H^3}}1$. Hence by Arzela-Ascolli theorem, $\{F^\epsilon(t,\psi)\}_{\epsilon>0}$ is compact in $\mathcal{C}([0,T])$, and we can choose a subsequence $F^{\epsilon_j}(t,\psi)$ converging to a function $F(t,\psi)\in \mathcal{C}([0,T])$ uniformly in $t$. In particular, from $\theta^\epsilon\rightarrow \theta$ in $\mathcal{C}([0,T]; L^2)$, we can further find a subsequence (still denote $F^{\epsilon_j}$) such that $F(t,\psi)= (\theta(t), \psi)$ for all $t\in [0,T]$. Next, since $\mathcal{C}_0^\infty(\mathbb{R}^2)$ is dense in $H^{-m}(\mathbb{R}^2)$ and
$\|\theta^\epsilon(t)\|_{H^m}$ is uniformly bounded in $[0,T]$, $F^{\epsilon_j}(t,\psi)$ converges to $F(t,\psi)$ for every $\psi\in H^{-m}$.
Then for every $t>0$ and $\psi\in H^{-m}$
\begin{equation*}
  |(\theta(t)-\theta_0,\psi)|\leq |(\theta(t)-\theta^{\epsilon_j}(t),\psi)|+ |(\theta^{\epsilon_j}(t)-\theta^{\epsilon_j}_0 ,\psi)|+ |(\theta_0^{\epsilon_j}-\theta_0,\psi)|.
\end{equation*}
All the three terms in the RHS can be made small for sufficiently small $\epsilon_j$ and $t$, thus $\theta(t)$ converges to $\theta_0$ weakly in $H^m$ as $t\rightarrow 0$. So we have
\begin{equation}\label{eq CntLinEs}
  \|\theta_0\|_{H^m}\leq \liminf_{t\rightarrow 0}\|\theta(t)\|_{H^m}.
\end{equation}
Furthermore, from \eqref{eq Hm-fact} we infer that for every $\epsilon>0$ the function $\|\theta^\epsilon(t)\|_{H^m}^2$ is below the graph of the solution of the equation
\begin{equation*}
  \frac{d}{dt}y(t)= C y(t)+ C y^2(t), \quad y(0)=\|\theta_0 \|_{H^m}^2.
\end{equation*}
By construction, the same holds for $\|\theta(t)\|_{H^m}^2$. Thus from the continuity of $y(t)$, we find $\|\theta_0\|_{H^m}\geq \limsup_{t\rightarrow 0}\|\theta(t)\|_{H^m}$. Therefore $\|\theta_0\|_{H^m}=\lim_{t\rightarrow 0}\|\theta(t)\|_{H^m}$, and the conclusion follows from this fact combined with the weak convergence.

\textit{Step 3: Uniqueness}

Let $\theta^{1}$, $\theta^{2}\in L^\infty ([0,T], H^{m}(\mathbb{R}^2))$ be two smooth solutions to the
modified quasi-geostrophic equation \eqref{MQG} with the same
initial data. Denote $u^{i}=|D|^{\alpha-1}R^{\bot}\theta^{i}$,
$i=1,2$, $\delta \theta=\theta^{1}-\theta^{2} $, $\delta u=
u^{1}-u^{2}$, then we write the difference equation as
\begin{equation*}
  \partial_{t}\delta\theta + u^{1}\cdot\nabla\delta\theta + \nu
  |D|^{\alpha}\delta\theta = -\delta u\cdot\nabla \theta^{2},\quad
  \delta\theta|_{t=0}=0
\end{equation*}
We also use the $L^{2}$ energy method, and in a similar way as
treating the term $II_{3}$, we obtain
\begin{equation*}
  \frac{d}{dt} \norm{\delta\theta}_{L^{2}}\leq C_{\alpha}
  \norm{\nabla\theta^{2}}_{\dot H^{\frac{\alpha}{2}}}^2 \norm{\delta\theta}_{L^{2}} \leq C_{\alpha}
  \norm{\theta^{2}}_{H^{m}}^2\norm{\delta\theta}_{L^{2}}.
\end{equation*}
Thus the Gr\"onwall inequality ensures $\delta\theta\equiv0$, that
is, $\theta^{1}\equiv\theta^{2}$.

\textit{Step 4: Smoothing Effect}

Precisely, we have that for all $\gamma\in \mathbb{R}^{+}$ and $t\in [0,T]$
\begin{equation}\label{eq SmthEst}
 \norm{t^{\gamma} \theta(t)} _{L^{\infty}_{T} H^{m+\gamma
 \alpha}}^2 +\|t^\gamma \theta(t)\|_{L^2_T H^{m+\alpha/2+\gamma\alpha}}^2 \leq C e^{C (\gamma+1)
 (T\norm{\theta}_{L^{\infty}_{T}H^{m}}^2+T)} \norm{\theta_0}_{ H^{m}}^2,
\end{equation}
where $C$ is an absolute constant depending only on $\alpha,\nu,m$.
Notice that $t^{\gamma}\theta$ ($\gamma >0$) satisfies
\begin{equation}\label{eq SmtEq}
  \partial_{t}(t^{\gamma}\theta) + u\cdot\nabla (t^{\gamma}\theta)
  +\nu |D|^{\alpha}(t^{\gamma}\theta)= \gamma t^{\gamma-1}\theta,
  \quad (t^{\gamma}\theta)|_{t=0} =0.
\end{equation}
which is a linear transport-diffusion equation with the
velocity $u=|D|^{\alpha-1}R^{\bot}\theta$, $\alpha\in ]0,2[$. We first treat the case
$\gamma\in \mathbb{Z}^{+}$. For $\gamma=1$, in a similar way as obtaining \eqref{eq HmEst1}, and using the Sobolev embedding
we infer
\begin{equation*}
\begin{split}
    \frac{d}{dt}\|t\theta(t)\|_{B^{m+\alpha}_{2,2}}^2 + \| t\theta(t)\|_{B^{m+\frac{3}{2}\alpha}_{2,2}}^2 & \lesssim (\|\nabla\theta(t)\|_{L^\infty}^\alpha\|\theta(t)\|_{L^\infty}^{2-\alpha} +\|\theta(t)\|_{L^2}) \|t\theta(t)\|_{B^{m+\alpha}_{2,2}}^2 + \|\theta(t)\|_{B^{m+\frac{\alpha}{2}}_{2,2}}^2
\\ & \lesssim (\|\theta(t)\|_{H^m}^2 +1) \|t\theta(t)\|_{B^{m+\alpha}_{2,2}}^2 + \|\theta(t)\|_{B^{m+\frac{\alpha}{2}}_{2,2}}^2.
\end{split}
\end{equation*}
Gronwall inequality yields that
\begin{equation}\label{eq SmthEst2}
\begin{split}
  \|t\theta(t)\|_{B^{m+\alpha}_{2,2}}^2 + \|t\theta(t)\|^2_{L^2_T B^{m+\frac{3}{2}\alpha}_{2,2}} \lesssim e^{CT+C T \| \theta\|_{L^\infty_T H^m}^2} \int_0^T\|\theta(\tau)\|_{B^{m+\frac{\alpha}{2}}_{2,2}}^2\mathrm{d}\tau.
\end{split}
\end{equation}
Meanwhile, similarly as obtaining \eqref{eq Hm-fact}, we get
\begin{equation}\label{eq SmthEst3}
  \|\theta(t)\|_{H^m}^2 +\|\theta\|_{L^2_T H^{m+\frac{\alpha}{2}}}^2 \leq \|\theta_0\|_{H^m}^2 e^{CT+ CT\|\theta\|_{L^\infty_T H^m}^2}.
\end{equation}
Thus \eqref{eq SmthEst} with $\gamma=1$ follows from \eqref{eq SmthEst2} and \eqref{eq SmthEst3} and the fact that the space $B^s_{2,2}$ is equivalent with $H^s$, $s\in\mathbb{R}$. Now suppose estimate \eqref{eq SmthEst} holds for $\gamma=N$, we shall
consider the case $N+1$. We use the equation \eqref{eq SmtEq} with $\gamma=N+1$. Similarly as above, and observing that the constant $C$ in \eqref{eq SmthEst2} is independent of $N$ if $\theta(t)$ is replaced by $t^N \theta(t)$ and $m$ by $m+N\alpha$, we have
\begin{equation*}
\begin{split}
  \|t^{N+1}\theta(t)\|_{H^{m+(N+1)\alpha}}^2 + \|t^{N+1}\theta(t)\|_{L^2_T H^{m+(N+1)\alpha+\frac{\alpha}{2}}}^2  & \lesssim e^{CT+ CT\|\theta\|_{L^\infty_T H^m}^2} \|t^{N}\theta(t)\|_{L^2_T H^{m+(N+\frac{1}{2})\alpha}}^2 \\
  &\lesssim e^{C(N+2)(T+ T\|\theta\|_{L^\infty_T H^m}^2)}\|\theta_0\|_{H^m}^2.
\end{split}
\end{equation*}
Thus the induction method ensures the estimate \eqref{eq SmthEst}
for all $\gamma\in \mathbb{Z}^{+}$. Also notice that for $\gamma=0$
the inequality \eqref{eq SmthEst} is also satisfied. Hence we
obtain estimate \eqref{eq SmthEst} for all $\gamma\in
\mathbb{N}$. For the general $\gamma \geq 0 $, we set
$[\gamma]\leq \gamma< [\gamma]+1$, where $[\gamma]$ denotes the
integer part of $\gamma$, and use the interpolation inequality in
Sobolev spaces to get
\begin{equation*}
 \begin{split}
   \norm{t^{\gamma} \theta}^2_{L^{\infty}_{T} H^{m+\gamma \alpha}}
   \leq & \|t^{[\gamma]} \theta\|_{L^{\infty}_{T} H^{m+[\gamma]
   \alpha}}^{2([\gamma]+1-\gamma)}\|t^{[\gamma]+1} \theta\|_{L^{\infty}_{T}
   H^{m+([\gamma]+1) \alpha}}^{2(\gamma-[\gamma])}
    \\  \lesssim &
    e^{C (\gamma+1) (T+ T\norm{\theta}_{L^{\infty}_{T}H^{m}}^2)}
   \norm{\theta_{0}}_{H^{m}}^2.
 \end{split}
\end{equation*}
Similar estimate holds for $\|t^\gamma\theta\|^2_{L^2_T H^{m+(\gamma+\frac{1}{2})\alpha}}$.

Therefore,  we conclude the Proposition \ref{prop local}.
\end{proof}

Now, we are devoted to building the blowup criterion.

\begin{proof}[Proof of Proposition \ref{prop break}]
We first note that the equation has a natural blowup criterion: if
$T^{*}<\infty$ then necessarily
\begin{equation*}
  \norm{\theta}_{L^{\infty}([0,T^{*}),H^{m})} + \norm{\theta}_{L^2([0,T^*), H^{m+\frac{\alpha}{2}})}=\infty.
\end{equation*}
Otherwise from the local result, the solution will continue over
$T^{*}$.

In the same way as obtaining the estimate \eqref{eq HmEst1}, we
get the similar result for the original equation
\begin{equation*}\label{eq HmGdEst}
  \frac{1}{2}\frac{d}{dt}\norm{\theta(t)}_{B^{m}_{2,2}}^{2}+
  \frac{\nu}{2}\norm{\theta(t)}_{B^{m+\frac{\alpha}{2}}_{2,2}}^{2}\leq
  C_{m,\alpha}\Big( \frac{1}{\nu} \norm{ \nabla \theta}_{L^\infty}^{\alpha} \norm{
 \theta}_{L^\infty}^{2-\alpha}\norm{\theta}_{B^{m}_{2,2}}^2
 + \norm{\theta}_{L^{2}}^2\norm{\theta}_{B^{m}_{2,2}}\Big).
\end{equation*}
Also due to the maximum principle Proposition \ref{prop MP}, we have
\begin{equation*}\label{eq HmGdEst1}
  \frac{d}{dt}\Big(\norm{\theta(t)}_{B^{m}_{2,2}}^2 + \nu \int_0^t \norm{\theta(\tau)}_{B^{m+\frac{\alpha}{2}}_{2,2}}^2\mathrm{d} \tau  \Big)\lesssim_{\alpha,\nu,m}
  \big(\norm{\nabla
  \theta(t)}_{L^{\infty}}^{\alpha}+ 1 \big)\norm{\theta(t)}_{B^{m}_{2,2}}^2.
\end{equation*}
This together with the Gr\"onwall inequality leads to
\begin{equation*}
\begin{split}
  \sup_{0\leq t\leq T}\norm{\theta(t)}_{H^{m}}^2 + \norm{\theta}_{L^2([0,T],H^{m+\frac{\alpha}{2}})}^2& \leq C_0\sup_{0\leq t\leq T} \norm{\theta(t)}_{B^m_{2,2}}^2 + C_0 \norm{\theta}_{L^2([0,T],B^{m+\frac{\alpha}{2}}_{2,2})}^2 \\ &
  \leq C  \exp\Big\{CT+  C\int_{0}^{T} \norm{\nabla\theta(t)}_{L^{\infty}}^\alpha\textrm{d} t \Big\}.
\end{split}
\end{equation*}
Further, if $T^{*}<\infty$ and the integral $\int_{0}^{T^{*}}
\norm{\nabla\theta(t)}_{L^{\infty}}^\alpha\textrm{d} t <\infty$, then from
the above estimate we directly have
\begin{equation*}
 \sup_{0\leq t< T^{*}}\norm{\theta(t)}_{H^{m}} + \norm{\theta}_{L^2([0,T^*), H^{m+\frac{\alpha}{2}})}<\infty.
\end{equation*}
Clearly this contradicts
the upper natural blowup criterion. Thus, if $T^{*}<\infty$, we necessarily have the equality
$\int_{0}^{T^{*}} \norm{\nabla\theta(t)}_{L^{\infty}}^\alpha\textrm{d} t
=\infty.$

\end{proof}

\section{Global Existence}
\setcounter{section}{5}\setcounter{equation}{0}

In this section, we use the modulus of continuity argument developed by Kiselev, Nazarov
and Volberg \cite{KisNV} to prove the global result. Throughout this
section, we assume $T^{*}$ be the maximal existence time of the
solution in $\mathcal{C}([0,T^{*}),H^{m})\cap L^2([0,T^*), H^{m+\frac{\alpha}{2}})$.

Let $\lambda>0$ be a real number which will be chosen later, then we
define the set
\begin{equation*}
   \mathcal{I}:=\big\{ T\in [0,T^{*})| \forall t\in [0,T], \forall x,y\in \mathbb{R}^{2},x\neq y,
   |\theta(t,x)-\theta(t,y)|< \omega_{\lambda}(|x-y|)\big\},
\end{equation*}
where $\omega$ is a strict modulus of continuity also satisfying that
$\omega'(0)<\infty$, $\lim_{\eta \searrow 0}\omega''(\eta)=-\infty$
and
$$\omega_{\lambda}(|x-y|)=\omega(\lambda|x-y|).$$ The
explicit expression of $\omega$ will be shown later (i.e. \eqref{eq modulus}).

We first show that the set $\mathcal{I}$ is nonempty, that is, at
least $0\in \mathcal{I}$. The proof is almost the same with the one
in \cite{ref AbidiH} only by setting $T_{1}$ there to be $0$. We omit it here and
only note that to fit our purpose $\lambda$ can be taken
\begin{equation}\label{eq lambdValue}
  \lambda=\frac{\omega^{-1}(3\norm{\theta_{0}}_{L^{\infty}})}
  {2\norm{\theta_{0}}_{L^{\infty}}}\norm{\nabla\theta_{0}}_{L^{\infty}}.
\end{equation}

Thus $\mathcal{I}$ is an interval of the form $[0,T_{*})$, where
$T_{*}:=\sup_{T\in\mathcal{I}}T$. We have three possibilities:
\begin{enumerate}[(a)]
  \item\label{case a} $T_{*}=T^{*} $
  \item\label{case b} $T_*< T^*$ and $T_{*}\in\mathcal{I}$
  \item\label{case c} $T_*< T^*$ and $T_{*}\notin \mathcal{I}$
\end{enumerate}

For case (a), we necessarily have $T^{*}=\infty$, since the Lipschitz
norm of $\theta$ does not blow up from the definition of
$\mathcal{I}$ which contradicts with \eqref{eq blowup}. This is our goal.

For case (b), we observe that this is just the case treated in \cite{ref
AbidiH} or \cite{ref Dong} showing that it is impossible. The proof only needs very small
modification, so we omit it either. We just point out in this case
the smoothing effects will be used, since we need the fact that
$\norm{\nabla^{2}\theta(T_{*})}_{L^{\infty}}$ is finite.

Then our task is reduced to get rid of the case (c). We prove by contradiction. If the case (c) is
satisfied, then by the time continuity of $\theta$, we necessarily get
\begin{equation*}
 \sup_{x,y\in\mathbb{R}^2,x\neq y}\frac{|\theta(T_*,x)-\theta(T_*,y)|}{\omega_\lambda(|x-y|)}=1.
\end{equation*}
We further have the following assertion (with its proof in the end of this section).

\begin{lemma}\label{lem fact}
If $T_*<T^*$ is the first time that the strict modulus of continuity $\omega_\lambda$ is lost (i.e. case (c)),
then there exists $x,y\in\mathbb{R}^{2} $, $x\neq y$ such that
\begin{equation}\label{eq fact}
  \theta(T_{*},x)-\theta(T_{*},y)=\omega_{\lambda}(\xi), \quad \text{with} \quad \xi:=|x-y|.
\end{equation}
Moreover, let $\ell=\frac{x-y}{|x-y|}$ and $v\in \mathbb{S}^1$ be the unit vector perpendicular to $\ell$, we have
\begin{equation}\label{eq fact0}
  \partial_\ell \theta(T_*,x)=\partial_\ell \theta(T_*,y)=\omega_\lambda'(\xi),\quad \partial_v \theta(T_*,x)=\partial_v \theta(T_*,y)=0,
\end{equation}
where $\partial_\ell =\ell \cdot \nabla$ and $\partial_v=v\cdot\nabla $ are the directional derivatives along $\ell$ and $v$ respectively.
\end{lemma}

We shall show that this scenario \eqref{eq fact} can not happen, more precisely, we shall prove
\begin{equation*}
  f'(T_{*})<0, \quad \textrm{with} \quad
  f(t):=\theta(t,x)-\theta(t,y).
\end{equation*}
This is impossible because we
necessarily have $f(t) \leq f(T_{*})$, for all $0\leq t\leq T_{*}$ from the definition of $\mathcal{I}$.

We see that the modified quasi-geostrophic equation \eqref{MQG} can
be defined in the classical sense (from the smoothing effect), and thus
\begin{equation*}
 \begin{split}
  f'(T_{*}) = &  -\Big[(u\cdot\nabla \theta)(T_{*},x)- (u\cdot\nabla \theta)(T_{*},y)
   \Big] + \nu \Big[(-|D|^{\alpha}\theta)(T_{*},x) - (-|D|^{\alpha}\theta)(T_{*},y)
   \Big] \\ :=& \, \mathcal{A}_{1} + \mathcal{A}_{2}
  \end{split}
\end{equation*}
with
 $$u=|D|^{\alpha-1}\mathcal{R}^{\bot}\theta=\mathcal{R}^{\bot}_{\alpha}\theta :=(-\mathcal{R}_{\alpha,2
 }\theta,\mathcal{R}_{\alpha,1}\theta)$$
 where $\mathcal{R}_{\alpha,j}$
are the modified Riesz transforms introduced in the section \ref{sec MOC}.

For the first term, $\mathcal{A}_{1}$, from \eqref{eq fact}, we find that
\begin{equation*}
\begin{aligned}
  \mathcal{A}_1 & = [(u(T_*,x)-u(T_*,y))\cdot \ell]  \omega_\lambda'(\xi) \\
  & = [(u(T_*,x)-u(T_*,y))\cdot \ell] \lambda  \omega'(\lambda\xi).
\end{aligned}
\end{equation*}
Lemma \ref{lem ModuliModRiez} gives us a rough estimate as follows
\begin{equation*}
  |\mathcal{A}_{1} | \leq \Omega_{\lambda}(\xi)\lambda\omega'(\lambda\xi)=\lambda^{\alpha} (\Omega\omega')(\lambda\xi),
\end{equation*}
where $\Omega_{\lambda}(\xi)$ is defined from \eqref{eq ModuliMRiez}, i.e.
\begin{equation}\label{eq omega}
\Omega_{\lambda}(\xi)=A
\bigg(\int^{\xi}_{0}\frac{\omega_{\lambda}(\eta)}{\eta^{\alpha}}\textrm{d}
\eta+
 \xi\int_{\xi}^{\infty}\frac{\omega_{\lambda}(\eta)}{\eta^{1+\alpha}}\textrm{d}
 \eta\bigg) = \lambda^{\alpha-1}\Omega(\lambda \xi).
\end{equation}

For the second term, $\mathcal{A}_{2}$, from Lemma \ref{lem Modulidiffer} we get
\begin{equation*}
\begin{split}
 \mathcal{A}_{2}\leq \nu \lambda^{\alpha}\Upsilon(\lambda \xi),
\end{split}
\end{equation*}
where
\begin{equation*}
\begin{split}
 \Upsilon(\xi):= &  \, B
 \int_{0}^{\frac{\xi}{2}}\frac{\omega(\xi+2\eta)+\omega (\xi-2\eta)
 -2\omega (\xi)}{\eta^{1+\alpha}}\textrm{d} \eta \\
  & + B\int_{\frac{\xi}{2}}^{\infty}\frac{\omega (2\eta+\xi)-\omega (2\eta-\xi)
 -2\omega (\xi)}{\eta^{1+\alpha}}\textrm{d}\eta.
\end{split}
\end{equation*}
Thus we obtain
\begin{equation}\label{eq f'}
  f'(T_{*})\leq \lambda^{\alpha}\big(\Omega \omega' + \nu \Upsilon)(\lambda
  \xi\big).
\end{equation}

Observe that when $\alpha\in ]1,2[$ and $\xi$ is a large number, the integral from $0$ to $\xi$ in the expression of $\Omega$ always produce much difficulty,
roughly speaking, to ensure the RHS of \eqref{eq f'} is negative, we need that the contribution from this part
$\omega(\xi)\omega'(\xi)\ll \frac{\omega(\xi)}{\xi^\alpha}$, thus it seems impossible to construct an appropriate unbounded MOC.
However, basically following a idea from \cite{Kis},
we can further develop the contribution of the dissipative term and use the additional dissipation to control the "bad" part of the nonlinearity so that
improved estimates of $\mathcal{A}_1$ and $\mathcal{A}_2$ can be obtained. Meanwhile, when $\alpha=1$ this method can also slightly improve the MOC constructed in
\cite{KisNV}. Precisely,
\begin{lemma}\label{lem ImpMdu}
  Under the condition of Lemma \ref{lem fact} and for $\alpha\in [1,2[$, we have
\begin{equation}
  \mathcal{A}_2 \leq \nu\lambda^{\alpha} \Upsilon(\lambda \xi)+ \nu\lambda^\alpha \Upsilon^\bot(\lambda\xi),
\end{equation}
where $\Upsilon^\bot\leq 0$ is a meaningful integral defined from $\theta$ and $\omega$ (with its explicit formula cf. Lemma 5.5 of \cite{MiaoXue2}). Correspondingly,
we can treat the drift term as follows
\begin{equation}
  |(u(T_*,x)-u(T_*,y))\cdot \ell|\leq \lambda^{\alpha-1} \widetilde{\Omega}(\lambda\xi)
\end{equation}
with
\begin{equation}
  \widetilde{\Omega}(\xi)=A \Big(-\xi\Upsilon^\bot(\xi) + \xi \int_\xi^\infty \frac{\omega(\eta)}{\eta^{1+\alpha}}\mathrm{d}\eta + \xi^{-\alpha+1}\omega(\xi) \Big),
\end{equation}
where $A$ is an absolute constant that may depend on $\alpha$.
\end{lemma}

\begin{proof}
  This is a direct consequence of Lemma 5.5 and Lemma 5.6 in \cite{MiaoXue2} when $\alpha\in]1,2[$, and they can simply extend to the case $\alpha=1$.
\end{proof}
Hence when $\alpha\in [1,2[$, based on Lemma \ref{lem ImpMdu}, we also get
\begin{equation}
  f'(T_*)\leq  \lambda^\alpha (\widetilde{\Omega}\omega' +\nu \Upsilon +\nu \Upsilon^\bot )(\lambda\xi).
\end{equation}

Next we shall construct our special modulus of continuity in the
spirit of \cite{KisNV}. Let $0<\gamma<\delta<1 $ be two small positive numbers chosen later, and define the continuous functions $\omega$ as follows
\begin{equation}\label{eq modulus}
\mathrm{MOC}\begin{cases}
 \omega(\xi)=\xi-\xi^{1+\frac{\alpha}{2}}
 \quad & \text{if} \quad 0\leq \xi\leq \delta, \\
 \omega'(\xi)=\frac{  \gamma}{4\xi}
 \quad & \text{if} \quad \xi>\delta,
\end{cases}
\end{equation}
equivalently,
\begin{equation}\label{eq modulus0}
 \omega(\xi)=
 \begin{cases}
 \xi-\xi^{1+\frac{\alpha}{2}}
 \quad & \text{if} \quad 0\leq \xi\leq \delta, \\
 \delta-\delta^{1+\frac{\alpha}{2}}+\frac{\gamma}{4}\log\frac{\xi}{\delta}
 \quad & \text{if} \quad \xi>\delta.
\end{cases}
\end{equation}
Note that, for small $\delta$, the left derivative of $\omega$ at
$\delta$ is about 1, while the right derivative equals
$\frac{\gamma}{4\delta}<\frac{1}{4}$.
So $\omega$ is concave if $\delta$ is small enough. Clearly,
$\omega(0)=0$, $\omega'(0)=1$ and $\lim_{\eta\rightarrow
0+}\omega''(\eta)= -\infty$, and $\omega$ is unbounded (it has the
logarithmic growth at infinity).

Then our target is to show that, for this MOC $\omega$, when $\alpha\in ]0,1[$
\begin{equation}\label{eq target1}
  \Omega(\xi)\omega'(\xi)+ \nu\Upsilon(\xi)<0  \qquad \textrm{for all}\quad\xi>0,
\end{equation}
and when $\alpha\in[1,2[$
\begin{equation}\label{eq target2}
  \widetilde{\Omega}(\xi) \omega'(\xi) + \nu \Upsilon(\xi) +\nu \Upsilon^\bot(\xi)<0 \qquad \textrm{for all}\quad\xi>0.
\end{equation}
In the following we shall carefully check these two formulae.
\vskip0.2cm
\textbf{Case \textrm{I}:} when $\alpha\in]0,1[$

Precisely, we shall check the following inequality
\begin{equation*}
\begin{split}
 A \bigg[\int^{\xi}_{0}\frac{\omega(\eta)}{\eta^{\alpha}}\textrm{d} \eta+&
 \xi\int_{\xi}^{\infty}\frac{\omega(\eta)}{\eta^{1+\alpha}}\textrm{d}
 \eta\bigg]\omega'(\xi)
 +\nu B \int_{0}^{\frac{\xi}{2}}\frac{\omega(\xi+2\eta)+\omega(\xi-2\eta)-2\omega(\xi)}{\eta^{1+\alpha}}\textrm{d}
 \eta \\
 +& \nu B\int_{\frac{\xi}{2}}^{\infty}\frac{\omega(2\eta+\xi)-\omega(2\eta-\xi)-2\omega(\xi)}{\eta^{1+\alpha}}\textrm{d}
 \eta <0 \quad \text{for all}\quad \xi>0.
\end{split}
\end{equation*}
We further divide it into two cases.

\textbf{Case \textrm{I}.1:} $\alpha\in]0,1[$ and $0<\xi\leq \delta$
\vskip0.2cm

Since $\frac{\omega(\eta)}{\eta}\leq \omega'(0)=1$ for all $\eta>0$
and $\eta\leq \eta^{\alpha}$ for $\eta\leq \delta<1 $, we have
$$\int_{0}^{\xi}\frac{\omega(\eta)}{\eta^{\alpha}}\textrm{d} \eta \leq
\int_{0}^{\xi}\frac{\omega(\eta)}{\eta}\textrm{d} \eta \leq \xi,$$
 and
$$\int_{\xi}^{\delta}\frac{\omega(\eta)}{\eta^{1+\alpha}}\textrm{d}
 \eta \leq \int_{\xi}^{\delta}\frac{1}{\eta^{\alpha}}\textrm{d}
 \eta = \frac{1}{1-\alpha} (\delta^{1-\alpha}-\xi^{1-\alpha})\leq \frac{1}{1-\alpha}.$$
  Furthermore,
\begin{equation*}
 \int_{\delta}^{\infty}\frac{\omega(\eta)}{\eta^{1+\alpha}}\textrm{d}
 \eta=\frac{1}{\alpha}\frac{\omega(\delta)}{\delta^{\alpha}}+
 \frac{1}{\alpha}\int_{\delta}^{\infty}\frac{  \gamma}{4\eta^{1+\alpha}}\textrm{d}
 \eta\leq \frac{1}{\alpha}+  \frac{1}{\alpha^2}
 \frac{\gamma}{\delta^{\alpha}}\leq \frac{2}{\alpha},
\end{equation*}
if $\gamma<\alpha \delta$. Clearly $\omega'(\xi)\leq\omega'(0)=1$, so we get that the positive
part is bounded by $ A \xi\frac{2}{\alpha(1-\alpha)}$.

For the negative part, we have
\begin{equation*}
\begin{split}
 \nu B \int_{0}^{\frac{\xi}{2}}&\frac{\omega(\xi+2\eta)+\omega(\xi-2\eta)-2\omega(\xi)}{\eta^{1+\alpha}}\textrm{d}
 \eta \leq
 \nu B\int_{0}^{\frac{\xi}{2}}\frac{\omega''(\xi)2\eta^{2}}{\eta^{1+\alpha}}\textrm{d}
 \eta\\  =  &
 -\nu B\frac{\alpha(2+\alpha)}{2^{1-\alpha}(2-\alpha)}\xi^{1-\frac{\alpha}{2}}\leq
 -\frac{\alpha}{2}\nu B\xi^{1-\frac{\alpha}{2}}.
\end{split}
\end{equation*}
But, clearly $\xi\Big(A \frac{2}{\alpha(1-\alpha)} -\frac{\alpha}{2}\nu B
\xi^{-\frac{\alpha}{2}}\Big)<0$ on
 $]0,\delta]$ when $\delta$ is small enough.
\vskip0.2cm
 \textbf{Case \textrm{I}.2:} $\alpha\in]0,1[$ and $\xi\geq \delta$
\vskip0.2cm

For $\eta\leq\delta<1$ we still use
$\eta^\alpha\geq \eta$ and for $\delta\leq
\eta\leq\xi$ we use $\omega(\eta)\leq\omega(\xi)$, then
\begin{equation*}
 \int_{0}^{\xi}\frac{\omega(\eta)}{\eta^{\alpha}}\textrm{d} \eta\leq
 \delta +\frac{\omega(\xi)}{1-\alpha}\Big(\xi^{1-\alpha}-\delta^{1-\alpha}\Big)\leq
 \omega(\xi)\Big(\frac{2}{\alpha}+\frac{\xi^{1-\alpha}}{1-\alpha}\Big),
\end{equation*}
where the last inequality is due to
$\frac{\alpha}{2}\delta<\omega(\delta)\leq\omega(\xi)$ if $\delta$ is small enough (i.e. $\delta<(1-\frac{\alpha}{2})^{2/\alpha}$). Also
\begin{equation*}
 \int_{\xi}^{\infty}\frac{\omega(\eta)}{\eta^{1+\alpha}}\textrm{d}
 \eta=\frac{1}{\alpha}\frac{\omega(\xi)}{\xi^{\alpha}}+
 \frac{1}{\alpha}\int_{\xi}^{\infty}\frac{\gamma}{4\eta^{1+\alpha}}\textrm{d}\eta \leq
 \frac{1}{\alpha}\frac{\omega(\xi)}{\xi^{\alpha}}+\frac{1}{\alpha^2} \frac{ \gamma}{2}\frac{1}{\xi^{\alpha}}\leq
 \frac{2}{\alpha}\frac{\omega(\xi)}{\xi^{\alpha}}
\end{equation*}
if $\gamma< \alpha^2 \delta $ (thus $\gamma/2\leq \alpha \omega(\xi)$) and $\delta$ is small enough. Thus the
positive term is bounded from above by
$$A
\omega(\xi)\bigg(\frac{2}{\alpha}+\Big(\frac{1}{1-\alpha}+\frac{2}{\alpha}\Big)\xi^{1-\alpha}\bigg)\omega'(\xi)\leq
A\frac{\omega(\xi)}{\xi^{\alpha}}\frac{2}{\alpha(1-\alpha)}(\xi+\xi^{\alpha})\frac{\gamma}{4\xi}\leq
\frac{A \delta^{\alpha-1}\gamma}{\alpha(1-\alpha)}\frac{\omega(\xi)}{\xi^{\alpha}}.$$

For the negative part, we first observe that for $\xi\geq\delta$,
\begin{equation*}
 \omega(2\xi)=\omega(\xi)+\int_{\xi}^{2\xi}\omega'(\eta)\textrm{d}
 \eta=\omega(\xi)+\frac{\log2}{2} \gamma\leq \frac{3}{2}\omega(\xi),
\end{equation*}
under the same assumptions on $\delta$ and $\gamma$ as above. Also,
taking advantage of the concavity we obtain
$\omega(2\eta+\xi)-\omega(2\eta-\xi)\leq \omega(2\xi)$ for all
$\eta\geq\frac{\xi}{2}$. Therefore
\begin{equation*}
 \nu B \int_{\frac{\xi}{2}}^{\infty}\frac{\omega(2\eta+\xi)-\omega(2\eta-\xi)-2\omega(\xi)}{\eta^{1+\alpha}}\textrm{d}
 \eta\leq - \nu B \frac{\omega(\xi)}{2}\int_{\frac{\xi}{2}}^{\infty}\frac{1}{\eta^{1+\alpha}} \textrm{d}
 \eta = -\nu B\frac{2^{\alpha}}{2\alpha}\frac{\omega(\xi)}{\xi^{\alpha}}.
\end{equation*}
But $\frac{\omega(\xi)}{\xi^{\alpha}}(\frac{A\delta^{\alpha-1}
\gamma}{\alpha(1-\alpha)}-\nu B\frac{2^{\alpha}}{2\alpha})<0$ if $\gamma$ is small enough
(i.e. $\gamma< \min\{\alpha^2\delta,\frac{\nu (1-\alpha)B2^\alpha}{2  A} \delta^{1-\alpha}\}$).

\vskip0.2cm
\textbf{Case \textrm{II}:} when $\alpha\in[1,2[$

Precisely, we shall check the following inequality
\begin{equation*}
\begin{split}
 A \bigg[-\xi\Upsilon^\bot(\xi)+& \xi^{-\alpha+1}\omega(\xi)+
 \xi\int_{\xi}^{\infty}\frac{\omega(\eta)}{\eta^{1+\alpha}}\textrm{d}
 \eta\bigg]\omega'(\xi)
 +\nu B \int_{0}^{\frac{\xi}{2}}\frac{\omega(\xi+2\eta)+\omega(\xi-2\eta)-2\omega(\xi)}{\eta^{1+\alpha}}\textrm{d}
 \eta \\
 +& \nu B\int_{\frac{\xi}{2}}^{\infty}\frac{\omega(2\eta+\xi)-\omega(2\eta-\xi)-2\omega(\xi)}{\eta^{1+\alpha}}\textrm{d}
 \eta + \nu \Upsilon^\bot(\xi)<0 \quad \text{for all}\quad \xi>0.
\end{split}
\end{equation*}
We also further divide it into two cases.

\textbf{Case \textrm{II}.1:} $\alpha\in[1,2[$ and $0<\xi\leq \delta$
\vskip0.2cm
Since $\frac{\omega(\eta)}{\eta}\leq \omega'(0)=1$ for all $\eta>0$ and $-\Upsilon^\bot(\xi)\geq 0$, we have $-\xi\Upsilon^\bot(\xi)\leq -\delta\Upsilon^\bot(\xi)$
and $ \xi^{-\alpha+1}\omega(\xi)\leq \xi^{2-\alpha}$ and
\begin{equation*}
\int_{\xi}^{\delta}\frac{\omega(\eta)}{\eta^{1+\alpha}}\textrm{d}
 \eta \leq \int_{\xi}^{\delta}\frac{1}{\eta^{\alpha}}\textrm{d}
 \eta\leq
 \begin{cases}
   \frac{1}{\alpha-1} \xi^{1-\alpha},\quad& \alpha\in ]1,2[, \\
   \log(\delta/\xi),\quad &\alpha=1.
 \end{cases}  .
\end{equation*}
  Further, integration by parts leads to
\begin{equation*}
\begin{split}
 \int_{\delta}^{\infty}\frac{\omega(\eta)}{\eta^{1+\alpha}}\textrm{d}
 \eta & =\frac{1}{\alpha}\frac{\omega(\delta)}{\delta^{\alpha}}+
 \frac{1}{\alpha}\int_{\delta}^{\infty}\frac{  \gamma}{4\eta^{\alpha+1}}\textrm{d}
 \eta \\
 & \leq \frac{1}{\alpha} \frac{1}{\delta^{\alpha-1}}+
  \frac{ \gamma}{4\alpha^2}\frac{1}{\delta^{\alpha}}\leq 2 \frac{1}{\delta^{\alpha-1}}\leq  2 \xi^{1-\alpha}.
\end{split}
\end{equation*}
Clearly $\omega'(\xi)\leq\omega'(0)=1$, so we get that the positive
part is bounded by
\begin{equation*}
\begin{cases}
 A \big(-\delta \Upsilon^\bot(\xi)+ \xi^{2-\alpha}\frac{4}{\alpha-1}\big),\quad &\alpha\in]1,2[, \\
 A \big(-\delta \Upsilon^\bot(\xi)+ \xi(3+\log\frac{\delta}{\xi})\big),\quad &\alpha=1.
\end{cases}
\end{equation*}

For the negative part, we have
\begin{equation*}
\begin{split}
 \nu B \int_{0}^{\frac{\xi}{2}}&\frac{\omega(\xi+2\eta)+\omega(\xi-2\eta)-2\omega(\xi)}{\eta^{1+\alpha}}\textrm{d}
 \eta \leq
 \nu B\int_{0}^{\frac{\xi}{2}}\frac{\omega''(\xi)2\eta^{2}}{\eta^{1+\alpha}}\textrm{d}
 \eta\\  =  &
 -\nu B\frac{\alpha(2+\alpha)}{2^{1-\alpha}(2-\alpha)}\xi^{1-\frac{\alpha}{2}}\leq
 -3\nu B\xi^{1-\frac{\alpha}{2}}.
\end{split}
\end{equation*}
But, clearly if $\delta$ is chosen small enough, we find that for every $\xi\in ]0,\delta]$
\begin{equation*}
\begin{cases}
  (-A\delta+\nu )\Upsilon^\bot(\xi)+ \xi^{2-\alpha}\big(A \frac{4}{\alpha-1}-3\nu B
  \xi^{\frac{\alpha}{2}-1}\big)<0, \quad & \alpha\in ]1,2[,\\
  (-A\delta+\nu )\Upsilon^\bot(\xi)+ \xi\big(3A +A \log\frac{\delta}{\xi}-3\nu B
  \xi^{-\frac{1}{2}}\big)<0, \quad & \alpha=1.
\end{cases}
\end{equation*}

\vskip0.2cm
 \textbf{Case \textrm{II}.2:} $\alpha\in [1,2[$ and $\xi\geq \delta$
\vskip0.2cm

For the positive part we have
\begin{equation*}
\begin{split}
 \int_{\xi}^{\infty}\frac{\omega(\eta)}{\eta^{1+\alpha}}\textrm{d}
 \eta & =\frac{1}{\alpha}\frac{\omega(\xi)}{\xi^{\alpha}}+
 \frac{1}{\alpha}\int_{\xi}^{\infty}\frac{ \gamma}{4\eta^{\alpha+1}}\textrm{d}
 \eta \\
 & \leq \frac{1}{\alpha} \frac{\omega(\xi)}{\xi^{\alpha}}+   \frac{ \gamma}{4\alpha^2}\frac{1}{\xi^{\alpha}}\leq 2
 \frac{\omega(\xi)}{\xi^{\alpha}},
\end{split}
\end{equation*}
where we have used the simple fact that $\gamma\leq\frac{\delta}{2}\leq \omega(\delta)\leq \omega(\xi)$.
Thus the positive term is bounded from above by
\begin{equation*}
  A \big(-\xi\Upsilon^\bot(\xi)+ 3\omega(\xi) \xi^{1-\alpha}\big)\omega'(\xi)
  =A \big(-\xi\Upsilon^\bot(\xi)+ 3\omega(\xi) \xi^{1-\alpha}\big)\frac{\gamma}{4\xi}\leq
  -A\gamma\Upsilon^\bot(\xi)+ A\gamma \frac{\omega(\xi)}{\xi^\alpha}.
\end{equation*}

For the negative part, in a similar way as treating the corresponding part in case I.2, we have
\begin{equation*}
 \nu B \int_{\frac{\xi}{2}}^{\infty}\frac{\omega(2\eta+\xi)-\omega(2\eta-\xi)-2\omega(\xi)}{\eta^{1+\alpha}}\textrm{d}
 \eta\leq - \nu B \frac{\omega(\xi)}{2}\int_{\frac{\xi}{2}}^{\infty}\frac{1}{\eta^{1+\alpha}} \textrm{d}
 \eta \leq -\frac{\nu B}{2}\frac{\omega(\xi)}{\xi^{\alpha}}.
\end{equation*}
But, clearly $(-A\gamma+\nu)\Upsilon^\bot(\xi)+ \frac{\omega(\xi)}{\xi^{\alpha}}(A\gamma-\frac{\nu B }{2})<0$ if
$\gamma$ is small enough.

Therefore both case $\textrm{I}$ and case $\textrm{II}$ yield $f'(T_{*})<0$.

Finally, only case \eqref{case a} occurs and we obtain $T^{*}=\infty$. Moreover
\begin{equation*}
\begin{aligned}
  \norm{\nabla\theta(t)}_{L^{\infty}} < \lambda,\qquad  \forall t\in [0,\infty[,
\end{aligned}
\end{equation*}
where the value of $\lambda\sim C\|\nabla\theta_0\|_{L^\infty} e^{C\|\theta_0\|_{L^\infty}} $ is given by \eqref{eq lambdValue}.

\begin{proof}[Proof of Lemma \ref{lem fact}]
 Set $C':=\omega^{-1}(3\norm{\theta_0}_{L^\infty})$, then from the maximum principle \eqref{eq TDMaxPrin}, we get
 \begin{equation}
  \lambda|x-y|\geq C' \Rightarrow |\theta(T_*,x)-\theta(T_*,y)|<\frac{2}{3}\omega_\lambda(|x-y|).
 \end{equation}
 Since $\nabla\theta(t)\in \mathcal{C}([0,T^*),H^{m-1}(\mathbb{R}^2))$, then for every $\epsilon>0$, there exists $R>0$ such that
 \begin{equation*}
  \norm{\nabla \theta(T_*)}_{L^\infty(\mathbb{R}^2 \setminus B_{R})}\leq C_0 \norm{\nabla\theta(T_*)}_{H^{m-1}(\mathbb{R}^2\setminus B_{R})}\leq \epsilon,
 \end{equation*}
 where $B_{R}$ is a ball centered at the origin with the radius $R$ and $\mathbb{R}^2\setminus B_R$ is its complement. Thus for every $x,y$($x\neq y$) satisfying
 that $\lambda|x-y|\leq C' $ and $x$ or $y$ belongs to $\mathbb{R}^2\setminus B_{R+C'/\lambda}$, we get
 \begin{equation*}
  |\theta(T_*,x)-\theta(T_*,y)|\leq \norm{\nabla \theta(T_*)}_{L^\infty(\mathbb{R}^2\setminus B_{R})}|x-y|\leq \epsilon |x-y|.
 \end{equation*}
 Taking advantage of the following inequality from the concavity of $\omega$
 \begin{equation*}
  \frac{\omega(C')}{C'}\lambda |x-y| \leq \omega_\lambda(|x-y|),
 \end{equation*}
 we can take $\epsilon$ small enough such that $\epsilon<\frac{1}{2} \frac{\omega(C')}{C'}\lambda$ to obtain
 \begin{equation}
  \lambda|x-y|\leq C', \, x\, \mathrm{or} \,y \in \mathbb{R}^2\setminus B_{R+\frac{C'}{\lambda}};\Rightarrow |\theta(T_*,x)-\theta(T_*,y)|<\frac{1}{2}\omega_\lambda(|x-y|).
 \end{equation}
 Now it remains to consider the case when $x,y\in B_{R+\frac{C'}{\lambda}}$. From the smoothing effect we know $\norm{\nabla^2
 \theta(T_*)}_{L^\infty}<\infty$, thus we have (cf. \cite{KisNV})
 \begin{equation*}
  \norm{\nabla \theta(T_*)}_{L^\infty(B_{R+\frac{C'}{\lambda}})}<\lambda\omega'(0).
 \end{equation*}
 Let $\delta'\ll 1$ small enough, then we see
 \begin{equation*}
  \norm{\theta(T_*)}_{L^\infty(B_{R+\frac{C'}{\lambda}})}< \lambda (1-\delta')\frac{\omega(\delta')}{\delta'}.
 \end{equation*}
 Thus for every $x,y$($x\neq y$) satisfying that $\lambda|x-y|\leq \delta'$ and both $x,y$ belongs to $B_{R+C'/\lambda}$, we have
 \begin{equation}
 \begin{split}
  |\theta(T_*,x)-\theta(T_*,y)| & \leq \norm{\nabla \theta(T_*)}_{L^\infty(B_{R+\frac{C'}{\lambda}})}|x-y| \\
  & < (1-\delta')\frac{\omega(\delta')}{\delta'}\lambda|x-y|
  \leq (1-\delta')\omega_\lambda(|x-y|).
 \end{split}
 \end{equation}
 We set
 \begin{equation*}
  \Omega:=\{(x,y)\in \mathbb{R}^2\times \mathbb{R}^2 : \max\{|x|,|y|\}\leq R+\frac{C'}{\lambda},\,  |x-y|\geq \frac{\delta'}{\lambda}\},
 \end{equation*}
 then from the above results we necessarily have
 \begin{equation*}
  1=\sup_{x\neq y}\frac{|\theta(T_*,x)-\theta(T_*,y)|}{\omega_\lambda(|x-y|)}=\sup_{(x,y)\in\Omega}\frac{|\theta(T_*,x)-\theta(T_*,y)|}{\omega_\lambda(|x-y|)}.
 \end{equation*}
 Thus the conclusion follows from the compactness of $\Omega$.

For \eqref{eq fact0}, it is from a direct computation under the scenario \eqref{eq fact} (cf. Proposition 2.4 in \cite{Kis}).
\end{proof}

\section{Appendix}
\setcounter{section}{6}\setcounter{equation}{0}

\subsection{The formula for $\mathcal{R}_{\alpha,j}f$}
\begin{proof}[Proof of Proposition \ref{prop Fmula}]
 The pseudo-differential operator $\mathcal{R}_{\alpha,j}$ ($\alpha\in]0,2[$) is the composition of two operators $|D|^{\alpha-1}$ and $\mathcal{R}_{j}$, which both are (constant
 coefficient) pseudo-differential operators, thus the symbol of $\mathcal{R}_{\alpha,j}$ is $-i \zeta_j/|\zeta|^{2-\alpha}$. Now we want to know
 the explicit formula of $\mathcal{F}^{-1}
 (-i \zeta_j/|\zeta|^{2-\alpha})$.

 From the equality in the distributional sense
 \begin{equation*}
  \frac{\partial}{\partial x_j} |x|^{-(n+\alpha-2)} = -(n+\alpha-2) \mathrm{p.v.} \frac{x_j}{|x|^{n+\alpha}},
 \end{equation*}
 and the known formula that for every $0<a<n$ (c.f. \cite{ref Duo})
 \begin{equation*}
  (|x|^{-a})^{\wedge}(\zeta)=\frac{2^{n-a}\pi^{n/2}\Gamma(\frac{n-a}{2})}{\Gamma(\frac{a}{2})} |\zeta|^{-n+a},
 \end{equation*}
 we directly have
 \begin{equation*}
 \begin{split}
  (\mathrm{p.v.} \frac{x_j}{|x|^{n+\alpha}})^{\wedge}(\zeta) & = - \frac{1}{n+\alpha-2} (\partial_{x_j} |x|^{-n-\alpha+2})^{\wedge}(\zeta) \\
  & =-\frac{i \zeta_j}{n+\alpha-2} (|x|^{-n-\alpha+2})^{\wedge}(\zeta) \\
  & =- \frac{i \zeta_j}{n+\alpha-2} \frac{2^{2-\alpha}\pi^{n/2}\Gamma(\frac{2-\alpha}{2})}{\Gamma(\frac{n+\alpha-2}{2})} |\zeta|^{\alpha-2} \\
  & =- i \frac{2^{1-\alpha}\pi^{n/2}\Gamma(\frac{2-\alpha}{2})}{\Gamma(\frac{n+\alpha}{2})}\cdot \frac{\zeta_j}{|\zeta|^{2-\alpha}}.
 \end{split}
 \end{equation*}
 \end{proof}

\subsection{A commutator estimate}
The key to the proof of the uniform estimate is the following
commutator estimate
\begin{lemma}\label{lem commutator}
Let $v$ be a divergence free vector field over $\mathbb{R}^{n}$. For every
$q\in\mathbb{N}$, denote
\begin{equation*}
F_{q}(v,f):=S_{q+1}v\cdot\nabla\Delta_{q}f-\Delta_{q}(v\cdot\nabla
f).
\end{equation*}
Then for every $\beta\in]0,1[$, there exists a positive constant $C$ such that
\begin{equation}\label{eq commutator1}
\begin{split}
 & 2^{-q\beta}\norm{F_{q}(v,f)}_{L^2}
 \\ \leq& C\norm{|D|^{1-\beta}
 v}_{L^{\infty}} \Big(\sum_{q'\leq q+4}2^{q'-q}\norm{\Delta_{q'}f}_{L^{2}} + \sum_{q'\geq q-4} 2^{(q-q')(1-\beta)}\norm{\Delta_{q'}f}_{L^2}\Big),
\end{split}
\end{equation}
\\
Especially, in the case $n=2$ and $v=|D|^{\alpha-1}\mathcal{R}^{\bot}f$ ($\alpha\in]0,2[$), we
further have for every $\beta\in\big]\max\{0,\alpha-1\},1\big[$ and every $q\in\mathbb{N}$
\begin{equation}\label{eq commutator2}
\begin{split}
 & 2^{-q\beta}\norm{F_{q}(v,f)}_{L^{2}} \\ \leq & C \Big(\norm{|D|^{1-\beta}
 v}_{L^{\infty}}\sum_{q'\geq q-4}2^{(q-q')(1-\beta)}\norm{\Delta_{q'}f}_{L^{2}}+
 \norm{|D|^{\alpha-\beta}f}_{L^{\infty}}\sum_{|q'-q|\leq
 4}\norm{\Delta_{q'}f}_{L^{2}}\Big).
\end{split}
\end{equation}
Moreover, when $\beta=0$, $\alpha\in ]0,1[$, \eqref{eq commutator1} and \eqref{eq commutator2} hold if we replace $\norm{|D|^{1-\beta}v}_{L^\infty}$ by $\norm{\nabla v}_{L^\infty}$;
and when $\beta=1$, $\alpha=2$, then \eqref{eq commutator1} and \eqref{eq commutator2} hold if we make such a modification
$$\norm{|D|^{1-\beta}v}_{L^\infty}\rightarrow\norm{v}_{B^0_{\infty,1}},\quad \norm{|D|^{\alpha-\beta}f}_{L^2}\rightarrow\norm{\nabla f}_{L^\infty}.$$
\end{lemma}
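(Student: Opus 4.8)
The plan is to run a Bony paraproduct decomposition of $v\cdot\nabla f$ and to organize $F_q$ so that the transport block $S_{q+1}v\cdot\nabla\Delta_q f$ is reconstituted, up to a genuine commutator, by the low–high paraproduct. Writing $v^k\partial_k f=T_{v^k}\partial_k f+T_{\partial_k f}v^k+R(v^k,\partial_k f)$ and localizing, only finitely many blocks survive under $\Delta_q$, and I would arrive at
\[
F_q(v,f)=-\sum_{|q'-q|\le 4}[\Delta_q,S_{q'-1}v\cdot\nabla]\Delta_{q'}f+\sum_{|q'-q|\le1}(S_{q+1}v-S_{q'-1}v)\cdot\nabla\Delta_q\Delta_{q'}f-\Delta_q T_{\partial_k f}v^k-\Delta_q R(v^k,\partial_k f),
\]
which I denote $\mathrm{I}_q,\mathrm{II}_q,\mathrm{III}_q,\mathrm{IV}_q$. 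The key algebraic fact is $\Delta_q\widetilde\Delta_q=\Delta_q$ with $\widetilde\Delta_q=\Delta_{q-1}+\Delta_q+\Delta_{q+1}$, so that the diagonal part of $\Delta_q T_{v^k}\partial_k f$ produces exactly $S_{q'-1}v\cdot\nabla\Delta_q\Delta_{q'}f$, leaving $\mathrm{II}_q$ as the only error after subtracting $S_{q+1}v\cdot\nabla\Delta_q f$.

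For $\mathrm{I}_q$ I would use the kernel representation $\Delta_q g(x)=2^{qn}\int h(2^qz)g(x-z)\,dz$ with $h=\mathcal F^{-1}\varphi$, so the commutator equals $2^{qn}\int h(2^qz)\bigl(S_{q'-1}v^k(x-z)-S_{q'-1}v^k(x)\bigr)\partial_k\Delta_{q'}f(x-z)\,dz$; bounding the difference by $|z|\,\|\nabla S_{q'-1}v\|_{L^\infty}$, using $\|\nabla S_{q'-1}v\|_{L^\infty}\lesssim 2^{q'\beta}\||D|^{1-\beta}v\|_{L^\infty}$ and $\int 2^{qn}|h(2^qz)|\,|z|\,dz\lesssim 2^{-q}$, together with $|q'-q|\le4$, gives a contribution $\lesssim 2^{q\beta}\||D|^{1-\beta}v\|_{L^\infty}\|\Delta_{q'}f\|_{L^2}$, i.e. the $\sum_{q'\ge q-4}2^{(q-q')(1-\beta)}$ term of \eqref{eq commutator1}. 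The error $\mathrm{II}_q$ is handled by $\|(S_{q+1}-S_{q'-1})v\|_{L^\infty}\lesssim 2^{-q(1-\beta)}\||D|^{1-\beta}v\|_{L^\infty}$ and Bernstein on $\nabla\Delta_q\Delta_{q'}f$, giving the same bound. For $\mathrm{IV}_q$ I would first invoke $\mathrm{div}\,v=0$ to extract a derivative, $\Delta_q R(v^k,\partial_k f)=\partial_k\Delta_q\sum_{q'\ge q-4}\Delta_{q'}v^k\,\widetilde\Delta_{q'}f$, and then use $\|\Delta_{q'}v\|_{L^\infty}\lesssim 2^{-q'(1-\beta)}\||D|^{1-\beta}v\|_{L^\infty}$ to reproduce the high-frequency sum.

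The high–low paraproduct $\mathrm{III}_q$ is the sole source of the first sum in \eqref{eq commutator1}: estimating $\|\Delta_q(S_{q'-1}\partial_k f\,\Delta_{q'}v^k)\|_{L^2}\le\|\Delta_{q'}v\|_{L^\infty}\|S_{q'-1}\nabla f\|_{L^2}$ with $\|S_{q'-1}\nabla f\|_{L^2}\lesssim\sum_{l\le q'-2}2^l\|\Delta_l f\|_{L^2}$ yields precisely $\||D|^{1-\beta}v\|_{L^\infty}\sum_{q'\le q+4}2^{q'-q}\|\Delta_{q'}f\|_{L^2}$ after multiplying by $2^{-q\beta}$. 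The refinement \eqref{eq commutator2} then comes \emph{entirely} from re-estimating $\mathrm{III}_q$ for $v=|D|^{\alpha-1}\mathcal R^{\bot}f$: since $\Delta_{q'}v=|D|^{\alpha-1}\mathcal R^{\bot}\Delta_{q'}f$ sits at frequency $2^{q'}\approx2^q$, I would instead put $\Delta_{q'}v$ in $L^2$ and $S_{q'-1}\nabla f$ in $L^\infty$, using $\|\Delta_{q'}v\|_{L^2}\lesssim 2^{q'(\alpha-1)}\|\Delta_{q'}f\|_{L^2}$ and $\|S_{q'-1}\nabla f\|_{L^\infty}\lesssim\||D|^{\alpha-\beta}f\|_{L^\infty}\sum_{l\le q'-2}2^{l(1-\alpha+\beta)}$. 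The hard part, and the precise place where the hypothesis $\beta>\max\{0,\alpha-1\}$ is forced, is the convergence of this last geometric sum: one needs $1-\alpha+\beta>0$, whereupon it sums to $\lesssim 2^{q'(1-\alpha+\beta)}$, the $2^{q'(\alpha-1)}$ factors cancel, and $\mathrm{III}_q$ is controlled by $\||D|^{\alpha-\beta}f\|_{L^\infty}\sum_{|q'-q|\le4}\|\Delta_{q'}f\|_{L^2}$, the second term of \eqref{eq commutator2}; the other three pieces are unchanged.

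The two endpoint modifications are then cosmetic. For $\beta=0$, $\alpha\in]0,1[$, only $\|\nabla S_{q'-1}v\|_{L^\infty}$ and $\|\Delta_{q'}v\|_{L^\infty}$ (never the Riesz-unbounded $\||D|v\|_{L^\infty}$) enter the estimates, and both are dominated by $\|\nabla v\|_{L^\infty}$ through the uniform $L^\infty$-boundedness of $S_{q'-1}$ and through $\|\Delta_{q'}v\|_{L^\infty}\lesssim 2^{-q'}\|\nabla\Delta_{q'}v\|_{L^\infty}$; for $\beta=1$, $\alpha=2$, the factors $2^{(q-q')(1-\beta)}$ degenerate to $1$, so the high-frequency bounds become $\ell^1$ sums of $\|\Delta_{q'}v\|_{L^\infty}$ and of $\|\nabla\Delta_{q'}f\|_{L^\infty}$, which is exactly why $\|v\|_{B^0_{\infty,1}}$ and $\|\nabla f\|_{L^\infty}$ must replace $\||D|^{1-\beta}v\|_{L^\infty}$ and $\||D|^{\alpha-\beta}f\|_{L^\infty}$.
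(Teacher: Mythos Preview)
Your proposal is correct and follows essentially the same approach as the paper: a Bony paraproduct decomposition of $v\cdot\nabla f$, a genuine commutator $[\Delta_q,S_{q'-1}v]$ estimated via the kernel representation and mean value theorem, and a re-estimation of the high--low paraproduct $T_{\nabla f}v$ (swapping the roles of $L^2$ and $L^\infty$) in the special case $v=|D|^{\alpha-1}\mathcal{R}^{\bot}f$, where the constraint $\beta>\alpha-1$ enters exactly as you identify. The only organizational difference is that the paper splits off the low-frequency block $\Delta_{-1}v$ at the outset and works with $\widetilde v=v-\Delta_{-1}v$, yielding six pieces $F_q^1,\dots,F_q^6$ rather than your four; your $\mathrm{III}_q$ corresponds to the paper's $F_q^5$, your $\mathrm{IV}_q$ to $F_q^6$, and your $\mathrm{I}_q,\mathrm{II}_q$ absorb what the paper distributes among $F_q^1,\dots,F_q^4$. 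This separation lets the paper handle the bound $\|\nabla\Delta_{-1}v\|_{L^\infty}\lesssim\||D|^{1-\beta}v\|_{L^\infty}$ explicitly via homogeneous blocks $\dot\Delta_j$, $j\le-1$ (this is where $\beta>0$ is used), a point you pass over silently but which goes through in your setting by the same homogeneous decomposition.
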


\begin{proof}
Using Bony decomposition, we decompose $F_{q}(v,f)$ into $\sum_{i=1}^{6}F^{i}_{q}(v,f)$, where
\begin{equation*}
 F_{q}^{1}(v,f)=(S_{q+1}v-v)\cdot\nabla\Delta_{q}f, \quad
 F_{q}^{2}(v,f)=[\Delta_{-1}v,\Delta_q]\cdot\nabla f,
\end{equation*}
\begin{equation*}
 F_{q}^{3}(v,f)=\sum_{q'\in\mathbb{N}}[S_{q'-1}\widetilde{v},\Delta_{q}]\cdot\nabla\Delta_{q'}f, \quad
 F_{q}^{4}(v,f)=\sum_{q'\geq -1}\Delta_{q'}\widetilde{v}\cdot\nabla \Delta_{q}S_{q'+2}f,
\end{equation*}
\begin{equation*}
 F_{q}^{5}(v,f)= -\sum_{q'\in\mathbb{N}}\Delta_{q}\Big(\Delta_{q'}\widetilde{v}\cdot\nabla S_{q'-1}f\Big), \quad
 F_{q}^{6}(v,f)=- \sum_{q'\geq -1}\mathrm{div}\Delta_{q}\Big(\Delta_{q'}\widetilde{v}\sum_{i\in\{\pm1,0\}}\Delta_{q'+i}f\Big),
\end{equation*}
where $[A,B]:=AB-BA$ denotes the commutator operator and
$\widetilde{v}:=v-\Delta_{-1}v$ denotes the high frequency part of $v$.
\\
For $F_{q}^{1}$, from the divergence-free property of $v$ we directly obtain that when $1-\beta>0$
\begin{equation*}\label{eq Fq1}
\begin{split}
 2^{-q\beta}\norm{F_{q}^{1}(v,f)}_{L^{2}}
  & \lesssim \sum_{q'\geq q+1}2^{(1-\beta)(q-q')} 2^{q'(1-\beta)}\norm{\Delta_{q'}v}_{L^\infty}\norm{\Delta_q f}_{L^2} \\ &\lesssim
 \norm{|D|^{1-\beta}v}_{L^\infty}\norm{\Delta_q f}_{L^2}.
\end{split}
\end{equation*}
For $F_q^2$, since $F^q_2(v,f)=\sum_{|q'-q|\leq 1} [\Delta_{-1}v,\Delta_{q}]\cdot\nabla\Delta_{q'} f$, then from the expression formula of
$\Delta_{q}$ and mean value theorem, we get that when $\beta>0$
\begin{equation*}\label{eq Fq2}
\begin{split}
 2^{-q\beta}\norm{F_{q}^{2}(v,f)}_{L^2}
 & \lesssim 2^{-q\beta} 2^{-q} \norm{\nabla\Delta_{-1}v}_{L^\infty}\sum_{|q'-q|\leq1}2^{q'}\norm{\Delta_{q'}f}_{L^{2}}
 \\ & \lesssim \sum_{-\infty\leq j\leq-1}2^{j\beta}\norm{|D|^{1-\beta}\dot \Delta_j v}_{L^\infty}\sum_{|q'-q|\leq1}\norm{\Delta_{q'}f}_{L^{2}}
 \\ &\lesssim \norm{|D|^{1-\beta} v}_{L^\infty}\sum_{|q'-q|\leq1}\norm{\Delta_{q'}f}_{L^{2}}.
\end{split}
\end{equation*}
For $F_q^3$, similarly as estimating $F_q^2$, we infer
\begin{equation*}\label{eq Fq3}
\begin{split}
 2^{-q\beta}\norm{F_{q}^{3}(v,f)}_{L^2}
 & \lesssim 2^{-q\beta}\sum_{|q'-q|\leq 4} 2^{-q}\norm{\nabla
 S_{q'-1}\widetilde{v}}_{L^\infty} 2^{q'}\norm{\Delta_{q'}f}_{L^{2}}
 \\ &\lesssim \sum_{|q'-q|\leq4} \sum_{q''\leq q'-2} 2^{(q''-q')\beta}\norm{|D|^{1-\beta}\Delta_{q''}\widetilde{v}}_{L^\infty}\norm{\Delta_{q'}f}_{L^2} \\
 &\lesssim \norm{|D|^{1-\beta} v}_{L^\infty}\sum_{|q'-q|\leq 4}\norm{\Delta_{q'}f}_{L^{2}}.
\end{split}
\end{equation*}
For $F^4_q$ and $F^5_q$, from the spectral property and the fact
$2^{q'(1-\beta)}\norm{\Delta_{q'}\widetilde{v}}_{L^\infty}\approx
\norm{\Delta_{q'}|D|^{1-\beta}\widetilde{v}}_{L^\infty}$, we have
\begin{equation*}\label{eq Fq4}
\begin{split}
 2^{-q\beta}\norm{F^4_q(v,f)}_{L^2}\lesssim \sum_{q'\geq q-2} 2^{(q-q')(1-\beta)} 2^{q'(1-\beta)}\norm{\Delta_{q'}\widetilde{v}}_{L^\infty}
 \norm{\Delta_q f}_{L^2}\lesssim \norm{|D|^{1-\beta}
 v}_{L^\infty}\norm{\Delta_q f}_{L^2}.
\end{split}
\end{equation*}
\begin{equation*}\label{eq Fq5}
\begin{split}
 2^{-q\beta}\norm{F_q^5(v,f)}_{L^2} & \lesssim 2^{-q\beta}\sum_{|q'-q|\leq 4}2^{q'}\norm{\Delta_{q'} \widetilde{v}}_{L^\infty}
 \sum_{q''\leq q'-2}2^{q''-q'}\norm{\Delta_{q''}f}_{L^2} \\
 & \lesssim \norm{|D|^{1-\beta} v}_{L^\infty} \sum_{q''\leq q+2}2^{q''-q}\norm{\Delta_{q''}f}_{L^2}.
\end{split}
\end{equation*}
Besides, for $F^5_q$ when $v=|D|^{\alpha-1}\mathcal{R}^{\bot}f$, we alteratively
have the following improvement that when $\beta>\alpha-1$
\begin{equation*}\label{eq Fq5.1}
\begin{split}
 2^{-q\beta}\norm{F_{q}^{5}(v,f)}_{L^{2}} & \leq 2^{-q\beta}\sum_{|q'-q|\leq
 4}\norm{\Delta_{q'}(Id-\Delta_{-1})|D|^{\alpha-1}\mathcal{R}^{\bot}f}_{L^{2}}
 \norm{\nabla S_{q'-1}  f}_{L^{\infty}} \\ & \lesssim \sum_{|q'-q|\leq
 4}\norm{\Delta_{q'}f}_{L^{2}}\sum_{-\infty\leq q''\leq q'-2} 2^{(\alpha-1-\beta)(q'-q'')}
 \norm{ |D|^{\alpha-\beta}\dot{\Delta}_{q''} f}_{L^{\infty}} \\ &
 \lesssim
 \norm{|D|^{\alpha-\beta}
 f}_{L^{\infty}}\sum_{|q'-q|\leq 4}
 \norm{\Delta_{q'}f}_{L^{2}}.
\end{split}
\end{equation*}
Finally, for $F^6_q$ we easily have
\begin{equation*}\label{eq Fq6}
\begin{split}
 2^{-q\beta}\norm{F^6_q(v,f)}_{L^2}& \lesssim \sum_{q'\geq q-3} 2^{(q-q')(1-\beta)}\, 2^{q'(1-\beta)}\norm{\Delta_{q'}\widetilde{v}}_{L^\infty}
 \sum_{i\in\{\pm1,0\}}\norm{\Delta_{q'+i}f}_{L^2} \\
 & \lesssim \norm{|D|^{1-\beta} v}_{L^\infty}\sum_{q'\geq q-4}2^{(q-q')(1-\beta)}\norm{\Delta_{q'} f}_{L^2}.
\end{split}
\end{equation*}
Combining the above estimates appropriately yields the inequalities
\eqref{eq commutator1} and \eqref{eq commutator2}.

\end{proof}

\subsection{Proof of Lemma \ref{lem Modulidiffer}}
\begin{proof}
We treat the general $n$-dimensional case. Let
$x=(x_{1},\tilde{x})=(x_{1},x_{2},\cdots,x_{n})$ and the Fourier
variable
$\zeta=(\zeta_{1},\tilde{\zeta})=(\zeta_{1},\zeta_{2},\cdots,\zeta_{n})$.
First we observe that for every $\alpha\in]0,2[$ (cf. \cite{ref CAFS})
\begin{equation*}
(-|D|^{\alpha})\theta=\frac{d}{dh}e^{-h|D|^{\alpha}}\theta\Big|_{h=0}=\frac{d}{dh}\mathcal{P}^{\alpha}_{h,n}*\theta\Big|_{h=0}
\end{equation*}
where
\begin{equation*}
\mathcal{P}^{\alpha}_{h,n}(x): =c'_{n,\alpha}\frac{h}{(|x|^{2}+\alpha^2 h^{\frac{2}{\alpha}})^{\frac{n+\alpha}{2}}}
\end{equation*}
and $c'_{n,\alpha}$ is the normalization constant such that
$\int\mathcal{P}^{\alpha}_{h,n}\textrm{d}
 x=1(=e^{-h|\zeta|^{\alpha}}|_{\zeta=0})$. In the following we take $\mathcal{P}_{h,n}$ instead
of $\mathcal{P}^{\alpha}_{h,n}$ for brevity. Thus our task reduces
to estimate
\begin{equation*}
(\mathcal{P}_{h,n}*\theta)(x)-(\mathcal{P}_{h,n}*\theta)(y).
\end{equation*}
Due to the translation and rotation invariant properties, we may
assume that $x=(\frac{\xi}{2},0,\cdots,0)$ and
$y=(-\frac{\xi}{2},0,\cdots,0)$. Then from the symmetry and
monotonicity of the kernel $\mathcal{P}_{h,n}$ and the fact
$$\int_{\mathbb{R}^{n-1}}\mathcal{P}_{h,n}(x_{1},\tilde{x})\textrm{d}
 \tilde{x}=\mathcal{F}^{-1}(\widehat{\mathcal{P}_{h,n}}|_{\tilde{\zeta}=0})(x_1)
 =\mathcal{F}^{-1}(e^{-h|\zeta_{1}|^{\alpha}})(x_1)=\mathcal{P}_{h,1}(x_{1})$$
we have
\begin{equation*}
\begin{split}
 &(\mathcal{P}_{h,n}*\theta)(x)-(\mathcal{P}_{h,n}*\theta)(y)
 \\
 &=\iint_{\mathbb{R}^{n}}\big[\mathcal{P}_{h,n}\bigl(\frac{\xi}{2}-\eta,-\tilde{\eta}\bigr)-
 \mathcal{P}_{h,n}\bigl(-\frac{\xi}{2}-\eta,-\tilde{\eta}\bigl)\big]
 \theta(\eta,\tilde{\eta})\textrm{d} \eta \textrm{d}\tilde{\eta}
 \\
 &= \int_{\mathbb{R}^{n-1}}\textrm{d} \tilde{\eta}\int_{0}^{\infty}
 \big[\mathcal{P}_{h,n}\bigl(\frac{\xi}{2}-\eta,\tilde{\eta}\bigr)-
 \mathcal{P}_{h,n}\bigl(-\frac{\xi}{2}-\eta,\tilde{\eta}\bigl)\big]
 \bigl[\theta(\eta,\tilde{\eta})-\theta(-\eta,\tilde{\eta})\bigr]\textrm{d} \eta
 \\
 & \leq  \int_{\mathbb{R}^{n-1}}\textrm{d} \tilde{\eta}\int_{0}^{\infty}
 \big[\mathcal{P}_{h,n}\bigl(\frac{\xi}{2}-\eta,\tilde{\eta}\bigr)-
 \mathcal{P}_{h,n}\bigl(-\frac{\xi}{2}-\eta,\tilde{\eta}\bigl)\big]
 \omega(2\eta)\textrm{d} \eta
 \\
 & =\int_{0}^{\infty}
 \big[\mathcal{P}_{h,1}\bigl(\frac{\xi}{2}-\eta\bigr)-
 \mathcal{P}_{h,1}\bigl(-\frac{\xi}{2}-\eta\bigl)\big]
 \omega(2\eta)\textrm{d} \eta
 \\
 & =\int_{0}^{\frac{\xi}{2}}\mathcal{P}_{h,1}(\eta)\bigl[\omega(2\eta+\xi)+\omega(\xi-2\eta)\bigr]\textrm{d}\eta
 +\int_{\frac{\xi}{2}}^{\infty}
 \mathcal{P}_{h,1}(\eta)\bigl[\omega(2\eta+\xi)-\omega(2\eta-\xi))\bigr]\textrm{d}\eta
\end{split}
\end{equation*}
Because of $\int_{0}^{\infty}\mathcal{P}_{h,1}(\eta)\textrm{d}
\eta=\frac{1}{2}$, we have the estimate of the difference
\begin{equation*}
\begin{split}
 (\mathcal{P}_{h,n}*\theta)(x)-&(\mathcal{P}_{h,n}*\theta)(y)-\omega(\xi)
 \\
 \leq & \int_{0}^{\frac{\xi}{2}}\mathcal{P}_{h,1}(\eta)\bigl[\omega(2\eta+\xi)+\omega(\xi-2\eta)-2\omega(\xi)\bigr]\textrm{d}
 \eta
 \\  &+ \int_{\frac{\xi}{2}}^{\infty}
 \mathcal{P}_{h,1}(\eta)\bigl[\omega(2\eta+\xi)-\omega(2\eta-\xi)-2\omega(\xi)\bigr]\textrm{d} \eta
\end{split}
\end{equation*}
Hence from the above estimates and the explicit formula of kernel
$\mathcal{P}_{h,1}$, we can conclude that
\begin{equation*}
\begin{split}
 &\bigl[(-|D|^{\alpha})\theta\bigr](x)-\bigl[(-|D|^{\alpha})\theta\bigr](y)
 \\
 &=
 \lim_{h\rightarrow 0}\frac{[(\mathcal{P}_{h,n}*\theta)(x)-\theta(x)]-[(\mathcal{P}_{h,n}*\theta)(y)-\theta(y)]}{h}
 \\
 &=\lim_{h\rightarrow
 0}\frac{(\mathcal{P}_{h,n}*\theta)(x)-(\mathcal{P}_{h,n}*\theta)(y)-\omega(\xi)}{h}
 \\
 & \lesssim_{\alpha,n} \int_{0}^{\frac{\xi}{2}}\frac{\omega(\xi+2\eta)+\omega(\xi-2\eta)-2\omega(\xi)}{\eta^{1+\alpha}}\textrm{d}
 \eta
  +\int_{\frac{\xi}{2}}^{\infty}\frac{\omega(2\eta+\xi)-\omega(2\eta-\xi)-2\omega(\xi)}{\eta^{1+\alpha}}\textrm{d}
 \eta
\end{split}
\end{equation*}

\end{proof}


\textbf{Acknowledgments:} The authors would like to thank Prof.
P.Constantin for helpful advice and discussion. They would also like to express their deep gratitude to the anonymous referees
for their kind suggestions. The authors were partly supported by the NSF of China (No.10725102).


\end{document}